\newcommand{\C}{\mathcal{C}}
\newcommand{\D}{\mathcal{D}}
\newcommand{\V}{\mathcal{V}}
\newcommand{\G}{\Gamma}
\newcommand{\Th}{\Theta}
\newcommand{\g}{\gamma}
\renewcommand{\b}{\beta}
 \newcommand{\dl}{\delta}
\newcommand{\om}{\omega}
\newcommand{\Om}{\Omega}
\newcommand{\Q}{\mathbf{Q}}
\DeclareMathOperator{\End}{End}
\DeclareMathOperator{\Set}{Set}
\begin{document}

\newtheorem{theorem}{Theorem}
\newtheorem{thm}{Theorem}
\newtheorem*{thm*}{Theorem}
\newtheorem*{conjecture*}{Conjecture}
\newtheorem{cor}{Corollary}
\newtheorem{rem}{Remark}
\newtheorem*{notation}{Notation}
\newtheorem{corollary}[theorem]{Corollary}
\newtheorem{corol}[theorem]{Corollary}
\newtheorem{conj}[theorem]{Conjecture}
\newtheorem{prop}[theorem]{Proposition}
\newtheorem{lemma}[theorem]{Lemma}
\newtheorem{defn}{Definition}
\newtheorem{lem}{Lemma}
\newtheorem{prob}{Problem}
\theoremstyle{definition}
\newtheorem{definition}[theorem]{Definition}
\newtheorem{example}[theorem]{Example}
\newtheorem{remarks}[theorem]{Remarks}
\newtheorem{remark}[theorem]{Remark}
\newtheorem{question}[theorem]{Question}
\newtheorem{problem}[theorem]{Problem}
\newtheorem{questions}[theorem]{Questions}
%%%%%%%%%%%%%%%%%%%%%%%%%%%%%%%%%%%%%%%%%%%%%%%%%%%%%
\def\toeq{{\stackrel{\sim}{\longrightarrow}}}
\def\into{{\hookrightarrow}}

\def\AA{\Bbb A}
\def\BA{\mathbb A}
\def\HH{\mathbb H}
\def\PP{\Bbb P}

%tilde
\def\Xt{{\widetilde X}}
\def\Gt{{\widetilde G}}

\def\hh{{\mathfrak h}}
\def\lie{\mathfrak a}

\def\XX{\mathfrak X}
\def\RR{\mathfrak R}
\def\NN{\mathfrak N}

\def\minus{^{-1}}

\def\GL{\textrm{GL}}            \def\Stab{\textrm{Stab}}
\def\Gal{\textrm{Gal}}          \def\Aut{\textrm{Aut\,}}
\def\Lie{\textrm{Lie\,}}        \def\Ext{\textrm{Ext}}
\def\PSL{\textrm{PSL}}          \def\SL{\textrm{SL}} \def\SU{\textrm{SU}}
\def\loc{\textrm{loc}}
\def\coker{\textrm{coker\,}}    \def\Hom{\textrm{Hom}}
\def\im{\textrm{im\,}}           \def\int{\textrm{int}}
\def\inv{\textrm{inv}}           \def\can{\textrm{can}}
\def\id{\textrm{id}}              \def\Char{\textrm{char}}
\def\Cl{\textrm{Cl}}
\def\Sz{\textrm{Sz}}
\def\ad{\textrm{ad\,}}
\def\SU{\textrm{SU}}
\def\Sp{\textrm{Sp}}
\def\PSL{\textrm{PSL}}
\def\PSU{\textrm{PSU}}
\def\rk{\textrm{rk}}
\def\PGL{\textrm{PGL}}
\def\Ker{\textrm{Ker}}
\def\Ob{\textrm{Ob}}
\def\Var{\textrm{Var}}
\def\poSet{\textrm{poSet}}
\def\Al{\textrm{Al}}
\def\Int{\textrm{Int}}
\def\Smg{\textrm{Smg}}
\def\ISmg{\textrm{ISmg}}
\def\Ass{\textrm{Ass}}
\def\Grp{\textrm{Grp}}
\def\Com{\textrm{Com}}
\def\rank{\textrm{rank}}

\def\char{\textrm{char}}

\def\wid{\textrm{wd}}

\newcommand{\Or}{\operatorname{O}}
\renewcommand{\a}{\alpha}
\newcommand{\thmref}[1]{Theorem~\ref{#1}}
\newcommand{\secref}[1]{Section~\ref{#1}}
\newcommand{\subsecref}[1]{Subsection~\ref{#1}}
\newcommand{\lemref}[1]{Lemma~\ref{#1}}
\newcommand{\corref}[1]{Corollary~\ref{#1}}
\newcommand{\propref}[1]{Proposition~\ref{#1}}

\newtheorem*{examples*}{Examples}

\newcommand{\st}{\sigma}
\renewcommand{\k}{\kappa}
\renewcommand{\th}{\theta}

\def\AGL{{\mathbb G\mathbb L}}
\def\ASL{{\mathbb S\mathbb L}}
\def\ASU{{\mathbb S\mathbb U}}
\def\AU{{\mathbb U}}

\title{On automorphisms of categories}
\author[Grigori Zhitomirski]{G. Zhitomirski\\
 Department of Mathematics and Statistics \\
 Bar-Ilan University, 52900 Ramat Gan, Israel}
\thanks {}

\maketitle

\begin{abstract}
Let $\V$ be a variety of algebras of some type $\Om$. An interest to describing automorphisms of the category $\Th \sp 0 (\V )$ of finitely generated free $\V$-algebras was inspired in connection with development of universal algebraic geometry founded by B. Plotkin. There are a lot of results on this subject. A common method of getting such results was suggested and applied by B. Plotkin and the author (\cite{ZhPlVar,ZhPlUnivAlg}). The method is to find all terms in the language of a given variety which determine such $\Om$-algebras that are isomorphic to a given $\Th \sp 0 (\V )$-algebra  and have the same underlying set with it.  But this method can be applied only to automorphisms which take all objects to isomorphic ones. 

The aim of the present paper is to suggest another method that is free from the mentioned restriction. 
This method is based on two main theorems. Let $\C$ be a category supplied with a faithful  representative functor to the category of sets. Theorem \ref{allAuto} gives a general description of automorphisms of $\C$, using a new notion of a quasi-inner automorphism.  Theorem \ref{product} shows how to obtain the full description of automorphisms of the category $\Th \sp 0 (\V )$. The investigation ends with two examples. The first of them shows the preference of our method in a known situation (the variety of all semigroups) and the second one demonstrates obtaining new results (the variety of all modules over arbitrary ring with unit).

\end{abstract}

\baselineskip 20pt
\bigskip

\section{INTRODUCTION}\label{Intro}

%\subsection{}
Let $\V$ be a variety of algebras of some type $\Om$ and  $\Th (\V)$  be the category of all $\V$-algebras and 
their homomorphisms. For an infinite set $X\sb0$, let $\Th \sp 0 (\V )$ denote the full subcategory of $\Th (\V )$ which is determined by all free $\V$-algebras over finite subsets of the set $X\sb0$. The problem is to describe all automorphisms of the category $\Th \sp 0 (\V )$.  Motivations for this research can be found in the papers \cite{MPP, MPP2, Seven, AlgGeom}. The most important case is when automorphisms of this category are inner or close to inner in a sense.

An automorphism $\Phi$ of a category $\C$ is called inner if it is isomorphic to the identity functor $Id \sp
{\C}$   in the category of all endofunctors of $\C$. It means that for every object $A$ of the given category
there exists an isomorphism $\sigma\sb A : A\to \Phi (A)$ such that for every morphism $\mu: A\to B$ we have $\Phi (\mu)=
\sigma\sb B \circ \mu \circ \sigma\sb A \sp {-1}$.  Thus an automorphism $\Phi$ may be inner only in the case when there is an isomorphism between $A$ and $\Phi (A)$ for every $\C$-object $A$. 

As regards the inverse proposition, it was shown in the papers \cite{ZhPlVar,ZhPlUnivAlg}  that if the objects $\Phi (A)$ and   $A$ are isomorphic for every $\C$-object $A$, then $\Phi$ is inner or so called potentially inner. The last one means that $\Phi $ is inner in a category obtained by adding to $\C$ some new morphisms. More accurately, there exists a family $(s\sb A :A \to \Phi (A))\sb {A \in Ob \C}$ of morphisms of an extended category such that for every morphism $\mu : A\to B$ of the category $\C$ we have $\Phi (\mu)=s\sb B \circ \mu \circ s\sb A \sp {-1}$. That way, the problem is to describe these new morphisms $s\sb A$.  A general method how to do it was suggested in the mentioned papers for categories  $\C =\Th \sp 0 (\V )$ where  $\V$ is a suitable variety. Below is a summary of the essence of this method. 

Consider a $\C$-algebra $A$ free generated by the set $\{x\sb 1,\dots , x\sb n\}$ of variables, where $n$ is greater than arities of all $\Om$-operators. We can assume that algebras $\Phi (A) $ and $A$ have the common base set $|A|$ and the same free generators. Then for every $k$-ary operation symbol $\om \in \Om$ we have two $k$-ary operations on $|A|$: $\om \sb A$ and  $\om \sb {\Phi (A)}$.

It is obvious that every element of the algebra $A$ can be considered as a term in the language corresponding to the algebra $\Phi (A)$ and vice versa.  In other words, every operation $\om \sb A$ is a derived operation in $\Phi (A) $, and vice versa, 
every operation $\om \sb {\Phi (A)}$ is a derived operation in $A$.  Thus the mappings $s\sb A:|A|\to|A|$ such that $s\sb A (x\sb i)=x\sb i$ and $s\sb A (\om \sb A (x\sb 1, \dots ,x\sb k))=\om \sb {\Phi (A)} (x\sb 1, \dots ,x\sb k))$ for every $\om \in \Om$ determine the automorphism $\Phi$. The problem is reduced to fined terms $\tilde{\om }$ in the algebra $\Phi (A)$ such that the derived algebra $(|A|, (\tilde{\om })\sb {\om \in \Om}) $ belongs to the variety $\V$.

Some new results were obtained on this way.  Particularly,  A. Tsurkov successfully applied this method to many-sorted algebras (for example in \cite{Tsurkov}). It is known that if a variety $\V$ satisfies IBN-property then every automorphism of the category $\Th \sp 0(\V ) $ takes every object to an isomorphic one. But there are varieties without IBN-property. If it is unknown whether  $A$ and $\Phi (A) $ are isomorphic or not, the method described above does not work.

The aim of the current investigation is to fill up this gap. Since an automorphism may be not inner in the general case, we use a new type of automorphisms called the {\it quasi-inner automorphisms} (Definition \ref{inner}), and it turns out that this notion is enough to characterize arbitrary automorphisms. Further reasoning as a matter of fact follows the ideas of the papers \cite{ZhPlVar,ZhPlUnivAlg}  when $A$ and $\Phi (A)$ are isomorphic, but instead of the method outlined above, a new method is proposed that to the author's opinion is more successful. 

This method reduces the problem to the case when the underlying set $|A|$ of an algebra $A$ is a subset of the underlying  set of the algebra $\Phi (A)$, and every endomorphism $\mu$ of the algebra $A$ is the restriction of the endomorphism $\Phi (\mu)$ of the algebra $\Phi (A)$ to  the set $|A|$ and every restriction to $|A|$ of an endomorphism $\nu $ of  $\Phi (A) $ is an endomorphism $\mu$ of $A$ such that  $\Phi (\mu)=\nu$. That circumstance gives us an opportunity to describe the action of the automorphism $\Phi$. 

The main results are formulated in two theorems. Theorem \ref{allAuto} states that every automorphism of 
a category $\C$ supplied with a forgetful functor $\Q:\C \to \Set$ which satisfies two acceptable conditions is potentially quasi-inner. Theorem \ref{product} states that every automorphism $\Phi$ of the category $\Th \sp 0 (\V )$ for an arbitrary variety $\V$ is the product of two functors  $\Phi =\G \circ \Psi$. The first of them is an inner isomorphism $\Psi :\Th \sp 0 (\V ) \to \D$ ,where $D$ is a full subcategory of $\Th \sp 0 (\V )$ which is described. The second functor $\G: \D \to \Th \sp 0 (\V )$ is a so called extension functor, that is, $|A| \subseteq |\G (A)|$  for every $\C$-algebra $A$ and $\mu \subseteq \G (\mu) $ for every $\C$-morphism $\mu$.

The last part of the paper contains two examples. The first of them shows the preference of our method in a known situation (the variety of all semigroups) and the second one presents a new result for the variety of all modules over arbitrary ring with unit. As a consequence of the last result we obtain that in the case when the ring does not contain zero divisors all automorphisms are semi-inner. 
%section{Quasi-inner automorphisms}\label{quasi}
\section{Quasi-inner automorphisms}\label{quasi}
We consider only such categories $\C$ which are supplied with a faithful functor  $\Q: \C \to Set$, where $Set$ is the category of all sets. We call $\Q$ the forgetful functor as it is accepted.
\begin{defn}\label{inner}   An automorphism $\Phi$ of a category $\C$ is called inner if there is an isomorphism between $\Phi$ and the identity functor $Id \sp{\C}$  in the category of all endofunctors of $\C$. An automorphism $\Phi$ is said to be {\it quasi-inner} if there is a bimorphism  $Id \sp{\C}\to \Phi$ . 
\end{defn}
This definition means that $\Phi$ is inner if for every object $A$ of the given category there exists an isomorphism $\sigma\sb A : A\to \Phi (A)$ such that for every morphism $\mu: A\to B$ the following diagram commutes:
\[
\CD
A @> \sigma \sb A>> \Phi (A)\\
@V\mu VV       @VV\Phi (\mu) V \\ 
B @> \sigma \sb B >> \Phi (B)
\endCD
\]
Hence we get that $\Phi (\mu)=\sigma\sb B \circ \mu \circ \sigma\sb A \sp {-1}$. 

As a generalization of this notion, $\Phi$ is quasi-inner if for every object $A$ of the given category there exists a {\it bimorphism} $\sigma\sb A : A\to \Phi (A)$ such that for every morphism $\mu: A\to B$ the diagram above is commutative. Hence we have only the following condition $\sigma\sb B \circ\mu =\Phi (\mu) \circ \sigma\sb A $  but not the expression for $\Phi (\mu)$ like in the case of inner automorphism if the morphism $\sigma\sb A $ is not invertible. Nevertheless, we have $\Q (\mu)=(\Q (\sigma\sb B ))\sp {-1}\circ \Q (\Phi (\mu)) \circ \Q (\sigma\sb A) $.
\begin{lem}\label{square}
Let $\Phi$  be a quasi-inner automorphism of $\C$ and $(\sigma\sb A)_{A\in Ob\C}:Id^\C \to \Phi$ be a bimorphism. If  $\sigma\sb B \circ \mu =\nu \circ \sigma\sb A $ for $\mu:A\to B, \; \nu : \Phi (A) \to \Phi (B) $,  then $\nu=\Phi (\mu)$.
Hence $\Phi (\mu)$ is uniquely determined  by the following inclusion: $\Q (\sigma\sb B )\circ \Q (\mu) \circ (Q (\sigma\sb A))\sp {-1} \subseteq \Q (\Phi (\mu)) $.
\end{lem}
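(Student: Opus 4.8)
The plan is to exploit faithfulness of $\Q$ together with the fact that $\sigma\sb A$, being a bimorphism, is in particular an epimorphism. First I would note that by the definition of quasi-inner automorphism applied to the morphism $\mu:A\to B$, the square
\[
\CD
A @> \sigma \sb A>> \Phi (A)\\
@V\mu VV       @VV\Phi (\mu) V \\
B @> \sigma \sb B >> \Phi (B)
\endCD
\]
commutes, i.e.\ $\sigma\sb B \circ \mu = \Phi(\mu)\circ \sigma\sb A$. By hypothesis we also have $\sigma\sb B \circ \mu = \nu\circ \sigma\sb A$. Equating the two right-hand sides gives $\Phi(\mu)\circ \sigma\sb A = \nu\circ \sigma\sb A$, and since $\sigma\sb A$ is an epimorphism we may cancel it on the right to conclude $\nu = \Phi(\mu)$.

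Next I would translate the uniqueness statement into the language of the forgetful functor. Applying $\Q$ to the commuting square, and using that $\Q(\sigma\sb A)$ and $\Q(\sigma\sb B)$ are bijections (a bimorphism in $\C$ need not be an isomorphism in $\C$, but $\Q$ being faithful and landing in $\Set$, the images $\Q(\sigma\sb A),\Q(\sigma\sb B)$ are injective and surjective maps of sets, hence invertible), we obtain
\[
\Q(\Phi(\mu)) = \Q(\sigma\sb B)\circ \Q(\mu)\circ \Q(\sigma\sb A)^{-1}.
\]
Read as a relation between graphs of set maps this is an equality, a fortiori the asserted inclusion $\Q(\sigma\sb B)\circ\Q(\mu)\circ(\Q(\sigma\sb A))^{-1}\subseteq \Q(\Phi(\mu))$; and since $\Q$ is faithful, the morphism $\Phi(\mu)$ is the unique $\C$-morphism $\Phi(A)\to\Phi(B)$ whose image under $\Q$ contains (equivalently, equals) that composite. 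This is precisely the claimed characterization.

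The only point that needs a little care — and the place where one must be slightly attentive rather than the genuine obstacle — is the justification that $\Q(\sigma\sb A)$ is invertible as a set map: a bimorphism is a morphism that is simultaneously a monomorphism and an epimorphism, and one must check that under a faithful functor to $\Set$ a monomorphism goes to an injection and an epimorphism to a surjection. Injectivity is immediate from faithfulness (a mono $\sigma$ with $\Q(\sigma)(x)=\Q(\sigma)(y)$ forces the two corresponding morphisms from a one-point object, or more carefully from the relevant free object, to agree), but surjectivity of $\Q$ applied to an epimorphism is the subtler half and relies on the structural hypotheses on $\C$ and $\Q$ that are assumed throughout this section; I would invoke exactly those running hypotheses rather than re-derive them here. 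Once that is in hand, the cancellation step and the faithfulness step are both routine.
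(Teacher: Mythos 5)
Your first half is exactly the paper's argument: the naturality square gives $\sigma\sb B\circ\mu=\Phi(\mu)\circ\sigma\sb A$, and since the bimorphism $\sigma\sb A$ is in particular an epimorphism you may cancel it on the right to get $\nu=\Phi(\mu)$. That part is fine.

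The second half, however, contains a genuine error: you claim that $\Q(\sigma\sb A)$ and $\Q(\sigma\sb B)$ are bijections because $\Q$ is faithful and $\sigma\sb A,\sigma\sb B$ are bimorphisms. A faithful functor \emph{reflects} monomorphisms and epimorphisms but does not \emph{preserve} them; in particular an epimorphism need not go to a surjection (think of $\mathbb{Z}\hookrightarrow\mathbb{Q}$ in rings), and nothing in this section — where only faithfulness of $\Q$ is assumed, the conditions 1Q and 2Q appearing only later — gives surjectivity. Indeed the whole point of the paper's notion of quasi-inner automorphism is that these comparison maps are injective but in general \emph{not} surjective (see Proposition~\ref{notsurjective} and Theorem~\ref{image}); that is precisely why the lemma asserts only the inclusion $\Q(\sigma\sb B)\circ\Q(\mu)\circ(\Q(\sigma\sb A))\sp{-1}\subseteq\Q(\Phi(\mu))$, with $(\Q(\sigma\sb A))\sp{-1}$ read as a partial inverse (or inverse relation), and not the equality you derive. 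The correct route needs no bijectivity at all: the inclusion follows by applying $\Q$ to the commuting square, and for uniqueness note that if $\nu:\Phi(A)\to\Phi(B)$ satisfies $\Q(\sigma\sb B)\circ\Q(\mu)\circ(\Q(\sigma\sb A))\sp{-1}\subseteq\Q(\nu)$, then evaluating on elements of the image of $\Q(\sigma\sb A)$ gives $\Q(\nu\circ\sigma\sb A)=\Q(\sigma\sb B\circ\mu)$, hence $\nu\circ\sigma\sb A=\sigma\sb B\circ\mu$ by faithfulness of $\Q$, and then $\nu=\Phi(\mu)$ by the cancellation argument of the first half. (Your parenthetical justification of injectivity also quietly uses the representability hypothesis 1Q rather than mere faithfulness, but injectivity is not actually needed for this reading of the statement.)
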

\begin{proof}
It is obvious because $\sigma\sb A $ is an epimorphism. 
\end{proof}

%%   Section Potentially quasi-inner automorphisms
\section{potentially quasi-inner automorphisms}\label{PotQuasi}
In this section, we assume that every category $\C$ under consideration is supplied with a forgetful functor  $\Q$ such that there exist a $\C$-object $A\sb 0$ and an element $x\sb 0 \in \Q (A\sb 0)$ which satisfy the following conditions:
\begin{enumerate}
\item[1Q)] $\Q$ is represented by the pair $(A\sb 0 ,x\sb 0)$, i.e., for every object $A$ of $\C$ and for every element $a\in \Q(A)$ there is exactly one morphism $\alpha:A\sb 0 \to A$ such that $ \Q(\alpha) (x\sb 0)=a$.
\item[2Q)] for every $\C$-object $A$  there exists a morphism $\a:A\to A\sb 0$ such that for every element $x\in \Q (A\sb 0 )$ we have $x=\Q (\a) (a)$ for some element $a\in \Q(A)$, in other words,  $\Q (\a):\Q (A)\to \Q (A\sb 0)$ is surjective.
\end{enumerate}
Some of the simple properties of such categories need to be noted:
\begin{lem}\label{mono_and_epi}
%\begin{enumerate}
\item[1.] Let $\mu: A\to B$ be a $\C$-morphism. If $\Q(\mu):\Q(A) \to \Q(B)$ is surjective then $\mu$ is an epimorphism.
\item[2.] A morphism  $\mu: A\to B$ is a monomorphism if and only if the map $\Q(\mu):\Q(A) \to \Q(B)$ is injective. 
\item[3.] Let $\Phi$ be an automorphism of the category $\C$. There exists an epimorphism $\eta :A\sb 0 \to \Phi (A\sb 0)$. If  $\eta$ is an isomorphism then $\Phi$ is potentially inner.
%\end{enumerate}
\end{lem}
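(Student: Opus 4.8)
The first two assertions are formal and use only faithfulness of $\Q$ together with condition 1Q. For the first, I would take $f,g\colon B\to C$ with $f\circ\mu=g\circ\mu$; applying $\Q$ gives $\Q(f)\circ\Q(\mu)=\Q(g)\circ\Q(\mu)$, surjectivity of $\Q(\mu)$ forces $\Q(f)=\Q(g)$, and faithfulness of $\Q$ yields $f=g$. The ``if'' half of the second assertion is the same pattern with left cancellation: if $\Q(\mu)$ is injective and $\mu\circ f=\mu\circ g$, then $\Q(\mu)\circ\Q(f)=\Q(\mu)\circ\Q(g)$, hence $\Q(f)=\Q(g)$ and $f=g$. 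For the ``only if'' half I would lift elements through $A_0$: given $a,a'\in\Q(A)$ with $\Q(\mu)(a)=\Q(\mu)(a')$, pick by 1Q the unique $\alpha,\alpha'\colon A_0\to A$ with $\Q(\alpha)(x_0)=a$, $\Q(\alpha')(x_0)=a'$; then $\mu\circ\alpha$ and $\mu\circ\alpha'$ both send $x_0$ to $\Q(\mu)(a)$, so the uniqueness clause of 1Q gives $\mu\circ\alpha=\mu\circ\alpha'$, whence $\alpha=\alpha'$ by monomorphy of $\mu$ and $a=a'$.

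For the third assertion, the existence of an epimorphism $\eta\colon A_0\to\Phi(A_0)$ is obtained by transporting condition 2Q through $\Phi$. Since $\Phi$ is invertible I would apply 2Q to the object $\Phi^{-1}(A_0)$, getting $\beta\colon\Phi^{-1}(A_0)\to A_0$ with $\Q(\beta)$ surjective; by the first assertion $\beta$ is an epimorphism. Any automorphism of $\C$ preserves epimorphisms (if $f\circ\Phi(\beta)=g\circ\Phi(\beta)$ then $\Phi^{-1}(f)\circ\beta=\Phi^{-1}(g)\circ\beta$, so $\Phi^{-1}(f)=\Phi^{-1}(g)$ and $f=g$), hence $\eta:=\Phi(\beta)\colon A_0\to\Phi(A_0)$ is the required epimorphism.

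Now assume $\eta$ is an isomorphism; the goal is a family $(s_A\colon A\to\Phi(A))_{A\in\Ob\C}$ witnessing that $\Phi$ is potentially inner in the sense of the Introduction. For $a\in\Q(A)$ let $\alpha_a\colon A_0\to A$ be the morphism with $\Q(\alpha_a)(x_0)=a$ furnished by 1Q, and set $s_A(a):=\Q\bigl(\Phi(\alpha_a)\circ\eta\bigr)(x_0)\in\Q(\Phi(A))$. Since $\Phi$ restricts to a bijection $\Hom(A_0,A)\to\Hom(\Phi(A_0),\Phi(A))$, since pre-composition with the isomorphism $\eta$ is a bijection $\Hom(\Phi(A_0),\Phi(A))\to\Hom(A_0,\Phi(A))$, and since evaluation at $x_0$ is a bijection $\Hom(A_0,\Phi(A))\to\Q(\Phi(A))$ by 1Q, the composite $s_A$ is a bijection. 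The decisive step is naturality: for $\mu\colon A\to B$ one has $\Q(\mu)(a)=\Q(\mu\circ\alpha_a)(x_0)$, so $\alpha_{\Q(\mu)(a)}=\mu\circ\alpha_a$, and therefore $s_B\bigl(\Q(\mu)(a)\bigr)=\Q\bigl(\Phi(\mu)\circ\Phi(\alpha_a)\circ\eta\bigr)(x_0)=\Q(\Phi(\mu))\bigl(s_A(a)\bigr)$, i.e. $s_B\circ\Q(\mu)=\Q(\Phi(\mu))\circ s_A$. Viewing $\C$ through the faithful $\Q$ as a subcategory of $\Set$ and adjoining the bijections $s_A$, each $s_A$ becomes an invertible morphism $A\to\Phi(A)$ of the extended category, and the commuting square rewrites as $\Phi(\mu)=s_B\circ\mu\circ s_A^{-1}$; thus $\Phi$ is potentially inner.

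Apart from the routine diagram chases, the only real work is in the last step: one must build $(s_A)$ so that it is simultaneously a family of bijections and natural, and the recipe $a\mapsto\Q(\Phi(\alpha_a)\circ\eta)(x_0)$ is essentially forced by 1Q. The point to watch is that invertibility of $\eta$ — not merely its being an epimorphism — is exactly what makes each $s_A$ bijective, and that ``extended category'' must be read in the weak sense of the Introduction, since we do not claim the $s_A$ are morphisms of $\C$ itself.
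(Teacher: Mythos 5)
Your proof is correct. Parts 1 and 2 and the construction of $\eta$ follow exactly the paper's route: part 1 is the standard faithfulness/right-cancellation argument, part 2 lifts a pair of elements through $(A_0,x_0)$ via 1Q and cancels the monomorphism, and $\eta$ is obtained by applying 2Q to $\Phi^{-1}(A_0)$, noting the resulting surjective-on-underlying-sets morphism is an epimorphism by part 1, and transporting it by $\Phi$ (the paper does not even spell out that automorphisms preserve epimorphisms, which you do). The only genuine divergence is the last implication: where the paper simply cites Theorem 1 of \cite{ZhPlVar} for ``$\eta$ an isomorphism $\Rightarrow$ $\Phi$ potentially inner,'' you give a self-contained construction, defining $s_A(a)=\Q(\Phi(\alpha_a)\circ\eta)(x_0)$, proving bijectivity as a composite of three bijections (representability by 1Q, fullness/faithfulness of the automorphism $\Phi$ on hom-sets, precomposition with the isomorphism $\eta$), and checking naturality. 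This is precisely the main-function formula \eqref{s} that the paper introduces later in the proof of Theorem \ref{allAuto}, specialized to the case where $\eta$ is invertible, so your argument is fully consistent with the paper and has the advantage of making this part of the lemma independent of the external reference; the cost is only that the meaning of ``extended category'' in the definition of potentially inner is used at the same informal level as in the paper's Introduction, with faithfulness of $\Q$ guaranteeing that the equation $\Q(\Phi(\mu))=s_B\circ\Q(\mu)\circ s_A^{-1}$ determines $\Phi(\mu)$.
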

\begin{proof}
\item[1.] It is obvious.
\item[2.] It is also obvious that if $\Q (\mu)$ is injective then $\mu$ is a monomorphism. Let $\mu: A\to B$ is a monomorphism. Suppose that $\Q (\mu)$ is not injective, i. e., there exist elements $a\sb 1, a\sb 2 \in \Q (A)$ such that $a\sb 1 \not =a\sb 2 $ but $\Q (\mu) (a\sb 1)=\Q (\mu) (a\sb 2)$. According to property 1Q, there are morphisms $\a\sb 1, \a\sb 2 :A\sb 0 \to A$ such that $\Q (\a\sb 1) (x\sb 0) =a\sb 1$ and  $\Q (\a\sb 2) (x\sb 0) =a\sb 2$. We have  $\Q(\mu) ( \Q(\a\sb 1) (x\sb 0))=\Q (\mu) ( \Q(\a\sb 2) (x\sb 0))$ and hence $\mu\circ \a\sb 1=\mu\circ \a\sb 2$. Since $\mu$ is a monomorphism we obtain that $\a\sb 1 =\a\sb 2$ which contradicts the supposition $a\sb 1 \not =a\sb 2 $ . 
\item[3.] Consider the object $\Phi \sp {-1} (A\sb 0)$. According to the property Q2 there is a morphism $\eta \sb 0 : \Phi \sp {-1} (A\sb 0) \to A\sb 0$ such that the mapping $\Q (\eta \sb 0) :\Q ( \Phi \sp {-1} (A\sb 0)) \to \Q (A\sb 0)$ is surjective. Thus 
$\eta \sb 0 : \Phi \sp {-1} (A\sb 0) \to A\sb 0$ is an epimorphism. Hence $\eta =\Phi (\eta \sb 0) : A\sb 0 \to \Phi (A\sb 0)$ is an epimorphism too. If $\eta $ is an isomorphism, then $\Phi$ is potentially inner according to Theorem 1 in \cite{ZhPlVar}.
\end{proof}
\begin{defn}\label{main_epi}
The surjective morphism $\eta \sb 0 : \Phi \sp {-1} (A\sb 0) \to A\sb 0$ and the epimorphism $\eta :A\sb 0 \to \Phi (A\sb 0)$ introduced by the proof of \lemref{mono_and_epi} are fixed as main epimorphisms connected with $\Phi$ and are denoted by $\eta \sb 0\sp \Phi$ and $\eta \sp \Phi$ correspondingly. If $A\sb 0 = \Phi (A\sb 0)$ we assume that $ \eta \sp \Phi= \eta \sb 0 \sp \Phi = 1\sb {A\sb 0} $.
\end{defn}
\begin{defn}\label{bijection}
Let $A,B\in Ob\C$. A mapping  $s:\Q(A)\to Q(B)$ is called $\C$-bijection if $s$ is injective and for any two $\C$-morphisms $\a\sb 1, \a\sb 2:  B\to C$ the equality $\Q(\a\sb 1)\circ s =\Q(\a\sb 2)\circ s$ implies $\a\sb 1=\a\sb 2$. Roughly speaking the mapping $s$ has the epimorphism property only in relation to $\C$-morphisms. 
\end{defn}
It is obvious that on the one hand the notion of $\C$-bijection is a generalization of the notion of bijection and, on the other
hand, if for a $\C$-morphism $\st :A\to B$ the mapping $\Q(\st):\Q(A)\to Q(B)$ is a $\C$-bijection then $\st :A\to B$ is a bimorphism.
\begin{defn}\label{potentially}
An automorphism $\Phi$ of a category $\C$ is called potentially quasi-inner if there exists a family of $\C$-bijections $(s\sb A :\Q(A) \to \Q(\Phi (A)))\sb {A\in Ob\C}$   such that for every $\C$-morphism $\mu :A\to B$ the following diagram commutes:
$$
\CD 
\Q (A) @> s\sb A>>\Q (\Phi (A))\\
@V\Q(\mu )VV        @VV\Q(\Phi (\mu)) V \\
\Q(B) @> s\sb B >>\Q ( \Phi (B))
\endCD
$$
It is easy to see that for potentially quasi-inner automorphisms the fact analogous to\lemref{square} is true too, i. e., if $s\sb B \circ \Q(\mu) =\Q (\nu) \circ s\sb A $ for some morphism  $\nu : \Phi (A) \to  \Phi (B)$ then $\nu =\Phi (\mu)$. Thus the morphism $\Phi (\mu)$ is uniquely determined by the commutativity of the diagram above.
\end{defn}
\begin{defn}\label{alpha}
Let $A$ be a $\C$-object and $a\in \Q (A)$.  We denote by $\a \sb a \sp A$ the unique morphism $A\sb 0 \to A$ such that 
$\Q (\a \sb a \sp A) (x\sb 0)=a$. 
\end{defn}
Now we are ready to prove the first of two main theorems.
\begin{thm}\label{allAuto}
Every automorphism of a category which satisfies conditions 1Q and 2Q  is potentially quasi-inner.
\end{thm}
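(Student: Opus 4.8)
The plan is to build the required family $(s_A)_{A\in\Ob\C}$ by hand, using only the data already fixed: the representing pair $(A_0,x_0)$ of condition 1Q and the main epimorphism $\eta^\Phi:A_0\to\Phi(A_0)$ of Definition~\ref{main_epi} (produced in \lemref{mono_and_epi} out of condition 2Q). Set $y_0:=\Q(\eta^\Phi)(x_0)\in\Q(\Phi(A_0))$. For a $\C$-object $A$ and $a\in\Q(A)$, recall $\alpha_a^A:A_0\to A$ from Definition~\ref{alpha} and put
\[
s_A(a):=\Q\bigl(\Phi(\alpha_a^A)\bigr)(y_0)=\Q\bigl(\Phi(\alpha_a^A)\circ\eta^\Phi\bigr)(x_0)\in\Q(\Phi(A)).
\]
It then remains to check three things: that the squares of Definition~\ref{potentially} commute, that each $s_A$ is injective, and that each $s_A$ has the $\C$-epimorphism property of Definition~\ref{bijection}.

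For naturality, given $\mu:A\to B$ and $a\in\Q(A)$, the identity $\Q(\mu\circ\alpha_a^A)(x_0)=\Q(\mu)(a)$ together with the uniqueness clause of 1Q yields $\alpha^B_{\Q(\mu)(a)}=\mu\circ\alpha_a^A$; applying $\Phi$, then $\Q$, then evaluating at $y_0$ gives $s_B(\Q(\mu)(a))=\Q(\Phi(\mu))(s_A(a))$, which is exactly commutativity of the diagram. For injectivity, if $s_A(a_1)=s_A(a_2)$ then $\Q(\Phi(\alpha^A_{a_1})\circ\eta^\Phi)(x_0)=\Q(\Phi(\alpha^A_{a_2})\circ\eta^\Phi)(x_0)$, so $\Phi(\alpha^A_{a_1})\circ\eta^\Phi=\Phi(\alpha^A_{a_2})\circ\eta^\Phi$ by uniqueness in 1Q; cancelling the epimorphism $\eta^\Phi$ and using faithfulness of $\Phi$ gives $\alpha^A_{a_1}=\alpha^A_{a_2}$, hence $a_1=a_2$.

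For the $\C$-bijection property, suppose $\beta_1,\beta_2:\Phi(A)\to C$ satisfy $\Q(\beta_1)\circ s_A=\Q(\beta_2)\circ s_A$. Reading this at each $a\in\Q(A)$ and arguing as in the previous paragraph forces $\beta_1\circ\Phi(\alpha_a^A)\circ\eta^\Phi=\beta_2\circ\Phi(\alpha_a^A)\circ\eta^\Phi$, hence $\beta_1\circ\Phi(\alpha_a^A)=\beta_2\circ\Phi(\alpha_a^A)$ for every $a$. The key observation is that $\{\alpha_a^A:a\in\Q(A)\}$ is precisely $\Hom_\C(A_0,A)$ (any $\gamma:A_0\to A$ equals $\alpha^A_{\Q(\gamma)(x_0)}$ by 1Q), and that this family is jointly epimorphic: if $g_1\circ\delta=g_2\circ\delta$ for all $\delta\in\Hom_\C(A_0,A)$, then $\Q(g_1)(a)=\Q(g_2)(a)$ for all $a$, so $\Q(g_1)=\Q(g_2)$ and $g_1=g_2$. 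Applying the automorphism $\Phi^{-1}$ to $\beta_1\circ\Phi(\alpha_a^A)=\beta_2\circ\Phi(\alpha_a^A)$ turns it into $\Phi^{-1}(\beta_1)\circ\alpha_a^A=\Phi^{-1}(\beta_2)\circ\alpha_a^A$ for all $a$, whence $\Phi^{-1}(\beta_1)=\Phi^{-1}(\beta_2)$ and $\beta_1=\beta_2$. Thus every $s_A$ is a $\C$-bijection and, by Definition~\ref{potentially}, $\Phi$ is potentially quasi-inner.

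The part that needs the real idea is not any single verification but the choice of $s_A$: everything must be routed through the representing data and, crucially, through the main epimorphism $\eta^\Phi$. Condition 2Q enters exactly here — since $\Phi(A_0)$ need not itself represent $\Q$, there is no direct way to define or control a map out of $\Q(\Phi(A))$, and the only available leverage is the ability to cancel the epimorphism $\eta^\Phi:A_0\to\Phi(A_0)$, which is what makes both the injectivity of $s_A$ and its $\C$-epimorphism property go through.
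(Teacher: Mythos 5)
Your proposal is correct and follows essentially the same route as the paper: you define $s_A(a)=\Q(\Phi(\alpha_a^A)\circ\eta^{\Phi})(x_0)$ exactly as in the paper's formula and verify injectivity, the $\C$-epimorphism property, and naturality by the same cancellations of the epimorphism $\eta^{\Phi}$ and the faithfulness of $\Phi$. Your explicit remark that the family $\{\alpha_a^A : a\in\Q(A)\}$ is jointly epimorphic merely spells out a step the paper leaves implicit, so there is no substantive difference.
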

\begin{proof}
Let $\Phi$ be an automorphism of a category $\C$  and $A$ be a $\C$-object.  Define the mapping $s\sb A :\Q (A) \to \Q (\Phi (A))$ by the formula:
\begin{equation}\label{s}
s\sb A (a)=\Q(\Phi (\a \sb a \sp A)\circ \eta ) (x\sb 0)
\end{equation}
for every $a\in A$. Here $\eta =\eta \sp \Phi $ according to Definition \ref{main_epi}.

Start to check that $s\sb A$ is a $\C$-bijection. 
\item{1)}. Let $s\sb A (u)=s\sb A (v)$ for $u,v\in \Q(A)$. Then $\Q (\Phi (\a \sb u \sp A)\circ \eta ) (x\sb 0) =\Q (\Phi (\a \sb v \sp A)\circ \eta )(x\sb 0)$. Hence $\Phi (\a \sb u \sp A)\circ \eta  =\Phi (\a \sb v \sp A)\circ \eta $. Since $\eta $ is a $\C$-epimorphism we obtain that $\Phi (\a \sb u \sp A)=\Phi (\a \sb v \sp A)$ which implies $\a \sb u \sp A =\a \sb v \sp A$, i. e., $u=v$. Hence $s\sb A$ is injective. 
\item{2)}.
Let $\Q (\g)\circ s\sb A =\Q (\dl) \circ s\sb A$ for some $\C$-morphisms $\g , \dl :\Phi (A) \to B$. Using \ref{s} we have
for all $a\in \Q (A)$
$$\Q (\g)\circ \Q(\Phi (\a \sb a \sp A)\circ \eta)(x\sb 0)=\Q (\dl) \circ \Q(\Phi (\a \sb a \sp A)\circ \eta)(x\sb 0)$$
and hence 
$$ \g\circ \Phi (\a \sb a \sp A)\circ \eta=\dl\circ \Phi (\a \sb a \sp A)\circ \eta $$
for all $a\in \Q (A)$. 

Since $\eta$ is an epimorphism we obtain $\g\circ \Phi (\a \sb a \sp A)=\dl\circ \Phi (\a \sb a \sp A)$. Applying $\Phi\sp {-1}$ to both sides of this equation we get that $\Phi\sp {-1}(\g)\circ \a \sb a \sp A =\Phi\sp {-1}(\dl)\circ \a \sb a \sp A$ for all $a\in \Q (A)$. Hence $\Phi\sp {-1}(\g) =\Phi\sp {-1}(\dl)$ which gives $\g =\dl$. Thus $s\sb A$ is $\C$-bijective. 

It remains to check that the corresponding square in Definition \ref{potentially} is commutative.  According to \ref{s} we have for every $a\in \Q (A)$ 
\begin{gather*}
\Q (\Phi (\mu))\circ s\sb A (a) =\Q (\Phi (\mu))\circ  \Q(\Phi (\a \sb a \sp A)\circ \eta)(x\sb 0)=\\
=\Q(\Phi (\mu)\circ \Phi (\a \sb a \sp A)\circ \eta)(x\sb 0) =\Q(\Phi (\mu \circ \a \sb a \sp A) \circ \eta)(x\sb 0) =\\
=\Q(\Phi (\a \sb {\Q (\mu) (a)} \sp B) \circ \eta)(x\sb 0) =s\sb B (\Q (\mu ) (a))=s\sb B\circ \Q (\mu) (a). 
\end{gather*}
Thus $\Q (\Phi (\mu))\circ s\sb A =s\sb B \circ \Q (\mu)$. 
\end{proof}
\begin{defn}\label{main_function}
The family of $\C$-bijections $(s\sb A :\Q(A) \to \Q(\Phi (A)))\sb {A\in Ob\C}$ defined by \ref{s} we call the {\bf main function} corresponding to $\Phi$. 
\end{defn}
It is easy to see that this notion is a generalization of the notion of the main function defined in  \cite{ZhPlVar}. In fact, if the main epimorphism  $\eta : A\sb 0 \to \Phi (A\sb 0)$ is an isomorphism we get the same formula for $s\sb A$ (see \cite{ZhPlVar} formula 2.2). 

Of course the main function is not the unique function $(\Q(A) \to \Q(\Phi (A)))\sb {A\in Ob\C}$ which makes the squares in  Definition \ref{potentially} commutative. For example, the main function for the identity functor is equal (according to \ref{s} and Definition \ref{main_epi})  to $s\sb A (a)=\Q (\a \sb a \sp A\circ 1\sb {A\sb 0} ) (x\sb 0)=a$ , that is, $s\sb A =\Q (1\sb A)$. Hence in that case we have $ s\sb B \circ \Q (\mu ) =\Q (\mu )\circ s\sb A$ for every morphism $\mu :A\to B$.  
Having in mind this property we use the following notion (see \cite{ZhPlVar,ZhPlUnivAlg}). 
\begin{defn}\label{cntr}
The family $(c\sb A)\sb {A\in Ob\C}$ of $\C$-bijective mappings $c\sb A :\Q(A) \to \Q(A)$ having the following property: $c\sb B \circ \Q (\mu )=\Q (\mu )\circ c\sb A$ for every morphism $\mu :A\to B$ is called a {\it central function}. In other words, a function $A \to c\sb A $ is a central function if it determines the identity automorphism of the category $\C$. 
\end{defn}

Let $\Phi$ be an automorphism of $\C$.   \thmref{allAuto} states that both $\Phi$ and $\Phi \sp{-1}$ are potentially quasi-inner.
Therefore we have two main functions  $(s\sp {\Phi}\sb A :A \to \Phi (A))\sb {A\in Ob\C}$ and $(s\sp {\Phi \sp {-1}}\sb A :A \to \Phi \sp {-1} (A))\sb {A\in Ob\C}$ of $\C$-bijections and two main epimorphisms $\eta \sp {\Phi}: A\sb 0\to \Phi (A\sb 0)$ and $\eta \sp {\Phi \sp {-1}}: A\sb 0\to \Phi \sp{-1}  (A\sb 0)$. 

Although in general case, the main functions do not satisfy many conditions which are valid in the case $\eta$ is an isomorphism, they have some similar properties. In particular,  we get   the following commutative diagram for every morphism $\mu :A\to B$:
$$
\CD 
\Q (A) @> s\sp {\Phi}\sb A>>\Q ( \Phi (A))@>s\sp {\Phi \sp {-1}}\sb {\Phi (A)}>>\Q (A)\\
@V\Q(\mu )VV        @VV\Q(\Phi (\mu)) V   @VV\Q(\mu )V\\
\Q (B) @>s \sp {\Phi}\sb B >>\Q (\Phi (B))@>s\sp {\Phi \sp {-1}}\sb {\Phi (B)}>>\Q(B)
\endCD
$$
We see that the family of mappings  $c\sb A \sp {\Phi}=s\sp {\Phi \sp {-1}}\sb {\Phi (A)}\circ s\sp {\Phi}\sb A:\Q (A)\to \Q (A)$ is a central function. This central function will be called the {\it main central function} for the automorphism $\Phi$.

It is obvious that for every function $A\mapsto s\sb A $ that determines an automorphism $\Phi$ and any two central functions $A \mapsto e\sb A$ and $A\mapsto d\sb A$ the function $A \mapsto (e\sb {\Phi (A)} \circ s\sb A \circ d\sb A )$ determines the same automorphism. The converse proposition is also true.
\begin{prop}\label{twofunctions}
Let a function $A\mapsto t\sb A$, where $A\in Ob\C$, determines an automorphism $\Phi$ of a category $\C$. Then there exists a central function $A\mapsto d\sb A$ such that $t\sb A =(c\sb{\Phi (A)}\sp {\Phi \sp{-1}}) \sp {-1}\circ s\sp {\Phi}\sb A \circ d\sb A $ for all $\C$-objects $A$.
\end{prop}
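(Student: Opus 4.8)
The plan is to produce the required central function by composing $t$ with the main function of $\Phi\sp{-1}$: set
\[
d\sb A := s\sp{\Phi\sp{-1}}\sb{\Phi (A)}\circ t\sb A : \Q (A)\to \Q (A),
\]
which is well defined since $s\sp{\Phi\sp{-1}}\sb{\Phi (A)}:\Q (\Phi (A))\to \Q (\Phi\sp{-1}(\Phi (A)))=\Q (A)$. Two things then remain: that $A\mapsto d\sb A$ is a central function, and that it satisfies the displayed identity.

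For centrality, first recall the naturality of the main function $s\sp{\Phi\sp{-1}}$ of $\Phi\sp{-1}$ (Theorem~\ref{allAuto} applied to $\Phi\sp{-1}$): $\Q (\Phi\sp{-1}(\nu))\circ s\sp{\Phi\sp{-1}}\sb C=s\sp{\Phi\sp{-1}}\sb D\circ \Q (\nu)$ for every $\nu :C\to D$. Combining this, with $\nu =\Phi (\mu)$, with the commuting square for $t$ gives, for every $\mu :A\to B$,
\[
\Q (\mu)\circ d\sb A=s\sp{\Phi\sp{-1}}\sb{\Phi (B)}\circ \Q (\Phi (\mu))\circ t\sb A=s\sp{\Phi\sp{-1}}\sb{\Phi (B)}\circ t\sb B\circ \Q (\mu)=d\sb B\circ \Q (\mu),
\]
which is the central-function square for $d$ (this is simply the ``composition of functions determining automorphisms'' observation made just before the proposition, specialised to $\Phi\sp{-1}\circ \Phi =Id\sp{\C}$). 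It remains to check that each $d\sb A$ is a $\C$-bijection. Injectivity is clear, $d\sb A$ being a composite of the injective maps $t\sb A$ and $s\sp{\Phi\sp{-1}}\sb{\Phi (A)}$. For the $\C$-epimorphism property, suppose $\Q (\g\sb 1)\circ d\sb A=\Q (\g\sb 2)\circ d\sb A$ with $\g\sb 1,\g\sb 2:A\to C$; rewriting $\Q (\g\sb i)\circ s\sp{\Phi\sp{-1}}\sb{\Phi (A)}=s\sp{\Phi\sp{-1}}\sb{\Phi (C)}\circ \Q (\Phi (\g\sb i))$ by naturality of $s\sp{\Phi\sp{-1}}$ (with $\nu =\Phi (\g\sb i)$) and left-cancelling the injective map $s\sp{\Phi\sp{-1}}\sb{\Phi (C)}$ gives $\Q (\Phi (\g\sb 1))\circ t\sb A=\Q (\Phi (\g\sb 2))\circ t\sb A$; since $t\sb A$ is a $\C$-bijection and $\Phi (\g\sb 1),\Phi (\g\sb 2):\Phi (A)\to \Phi (C)$ are $\C$-morphisms, this forces $\Phi (\g\sb 1)=\Phi (\g\sb 2)$, hence $\g\sb 1=\g\sb 2$. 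Thus $d$ is a central function.

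Finally, evaluating the main central function of $\Phi\sp{-1}$ at $\Phi (A)$ gives $c\sb{\Phi (A)}\sp{\Phi\sp{-1}}=s\sp{\Phi}\sb{\Phi\sp{-1}(\Phi (A))}\circ s\sp{\Phi\sp{-1}}\sb{\Phi (A)}=s\sp{\Phi}\sb A\circ s\sp{\Phi\sp{-1}}\sb{\Phi (A)}$, so
\[
s\sp{\Phi}\sb A\circ d\sb A=s\sp{\Phi}\sb A\circ s\sp{\Phi\sp{-1}}\sb{\Phi (A)}\circ t\sb A=c\sb{\Phi (A)}\sp{\Phi\sp{-1}}\circ t\sb A.
\]
Since $c\sb{\Phi (A)}\sp{\Phi\sp{-1}}$ is a $\C$-bijection it is injective, hence left-cancellable; applying its left inverse $(c\sb{\Phi (A)}\sp{\Phi\sp{-1}})\sp{-1}$ to the last equality yields exactly $t\sb A=(c\sb{\Phi (A)}\sp{\Phi\sp{-1}})\sp{-1}\circ s\sp{\Phi}\sb A\circ d\sb A$, as claimed.

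The step I expect to require the most care is showing that $d$ is a genuine central function — that is, the $\C$-epimorphism property of the $d\sb A$ rather than merely the commuting square — and, connected with this, being precise about the symbol $(c\sb{\Phi (A)}\sp{\Phi\sp{-1}})\sp{-1}$: since a central function need not consist of bijections, this should be read as a one-sided (left) inverse, and the cleanest inverse-free form of the conclusion is the identity $c\sb{\Phi (A)}\sp{\Phi\sp{-1}}\circ t\sb A=s\sp{\Phi}\sb A\circ d\sb A$ obtained above.
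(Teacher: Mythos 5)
Your proof is correct and follows essentially the same route as the paper: you define $d\sb A = s\sp{\Phi\sp{-1}}\sb{\Phi(A)}\circ t\sb A$, observe $s\sp{\Phi}\sb A\circ d\sb A = c\sp{\Phi\sp{-1}}\sb{\Phi(A)}\circ t\sb A$, and cancel the injective map $c\sp{\Phi\sp{-1}}\sb{\Phi(A)}$, exactly as in the paper's proof. The only difference is that you spell out the centrality and $\C$-bijectivity of $d$ (which the paper dismisses as obvious) and the one-sided reading of $(c\sp{\Phi\sp{-1}}\sb{\Phi(A)})\sp{-1}$, which is a reasonable clarification rather than a different argument.
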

\begin{proof}
Let $d\sb A = s\sb {\Phi (A)}\sp {\Phi \sp {-1}}\circ t\sb A$. It is obvious that the function $A\mapsto d\sb A$ is central. 
We get $s\sb{\Phi (A)}\sp {\Phi} \circ s\sb {\Phi (A)}\sp {\Phi \sp {-1}}\circ t\sb A=s\sb {\Phi (A)}\sp {\Phi} \circ d\sb A$.
Since $s\sb{\Phi (A)}\sp {\Phi} \circ s\sb {\Phi (A)}\sp {\Phi \sp {-1}}=c\sb{\Phi (A)}\sp {\Phi \sp{-1}}$ is injective, we get 
$t\sb A =(c\sb{\Phi (A)}\sp {\Phi \sp{-1}}) \sp {-1}\circ s\sp {\Phi}\sb A \circ d\sb A $
\end{proof}

%% Section The categories of free  universal algebras
\section{The categories of free universal algebras}\label{Free}
From now until the end of the article we deal with an arbitrary variety $\V$  of universal algebras of some signature $\Om$ and with the corresponding category $\Th (\V)\sp 0$  of all finitely generated $\V$-algebras and their homomorphisms. This category and all its subcategories are supplied with usual forgetful functor $\Q$ that assigns to every algebra $A$ its underlying set $|A|$ and to every homomorphism $A\to B$ the corresponding mapping $|A|\to |B|$. If there are no misunderstanding we denote an algebra by the same letter as its underlying set  and do  the same  for homomorphisms.  

Let  $A\sb 0$ be a monogenic  algebra free generated by an element $x\sb 0$. We consider such full subcategories of $\Th (\V)\sp 0$ that contain the object  $A\sb 0$.  It is obvious that every such category satisfies the conditions 1Q and 2Q formulated in \secref{PotQuasi} because the pair $(A\sb 0, x\sb 0)$ represents the forgetful functor $\Q$.

Let $\C$ be a full subcategory of $\Th (\V)\sp 0$  and $A\sb 0$ be an object of  $\C$. Let $\Phi$ be an automorphism of $\C$.   \thmref{allAuto} states that both $\Phi$ and $\Phi \sp{-1}$ are potentially quasi-inner.
Therefore we have two main functions  $(s\sp {\Phi}\sb A :A \to \Phi (A))\sb {A\in Ob\C}$ and $(s\sp {\Phi \sp {-1}}\sb A :A \to \Phi \sp {-1} (A))\sb {A\in Ob\C}$ of $\C$-bijections and four main epimorphisms:
$$\eta \sp {\Phi}\sb 0: \Phi \sp {-1} (A\sb 0) \to A\sb 0,\; \eta \sp {\Phi}: A\sb 0\to \Phi (A\sb 0),$$
 $$\eta \sb 0\sp {\Phi \sp {-1}}: \Phi (A\sb 0) \to \ A\sb 0,\;   
 \eta \sp {\Phi \sp {-1}}: A\sb 0\to \Phi \sp{-1}  (A\sb 0).$$ 

%%Now it is worth to define concretely the main epimorphism $\eta \sp {\Phi}: A\sb 0\to \Phi (A\sb 0)$.
By definition, $\eta \sp {\Phi} =\Phi (\eta \sp {\Phi}\sb 0)$. The epimorphism $\eta \sp {\Phi}\sb 0$ was introduced in the process of the proof of \lemref{mono_and_epi} as an epimorphism $\Phi \sp {-1} (A\sb 0) \to A\sb 0$ but now we can specify it. Let $X$ be a basis of $\Phi \sp {-1} (A\sb 0)$. Then we set $\eta \sp {\Phi}\sb 0 (x) =x\sb 0$ for all $x\in X$. It is obvious that some arbitrariness remains in the definition of  $\eta \sp {\Phi}\sb 0$  which depends on the choice of  $X$. Specifically, if there is an isomorphism between $\Phi \sp {-1} (A\sb 0)$ and $A\sb 0$ we prefer to assume that $\eta \sp {\Phi}\sb 0$ is equal to corresponding isomorphism. All this concerns the epimorphism $\eta \sb 0\sp {\Phi \sp {-1}}$ too.

Just as in the previous section, we have the main central function $(c\sb A\sp{\Phi})\sb {A\in Ob\C}$.  Now we describe the images of this function.
\begin{prop}\label{c-work}
Let $A$ be a $\C$-algebra and $a\in A$. Let $c\sb A\sp {\Phi} =s\sp {\Phi \sp {-1}}\sb {\Phi (A)}\circ s\sp {\Phi}\sb A$. Denote by $w\sp {\Phi}(x\sb 0)$ the term $\eta \sp {\Phi} \sb 0\circ \eta \sp {\Phi \sp {-1}}  (x\sb 0)\in A\sb 0$. 
Then $c\sb A\sp {\Phi} (a)$ is an element of subalgebra of $A$ generated by $a$, namely, $ c\sb A\sp {\Phi} (a)=w\sp {\Phi}(a)$. If $w\sp {\Phi}(x\sb 0)=x\sb 0$  then all mappings $c\sb A \sp {\Phi}$ are the identity mappings and therefore $\Phi$ is a potentially inner automorphism.
\end{prop}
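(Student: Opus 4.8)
The plan is to expand the two instances of formula~\ref{s} that are built into the main central function $c\sb A\sp\Phi=s\sp{\Phi\sp{-1}}\sb{\Phi(A)}\circ s\sp\Phi\sb A$ and to push the whole computation down to the monogenic algebra $A\sb 0$, whose elements are literally the unary terms of the signature.

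First I would put $b:=s\sp\Phi\sb A(a)=\Q(\Phi(\a\sb a\sp A)\circ\eta\sp\Phi)(x\sb 0)$. The morphism $\Phi(\a\sb a\sp A)\circ\eta\sp\Phi\colon A\sb 0\to\Phi(A)$ sends $x\sb 0$ to $b$, so by the uniqueness clause of condition~1Q it must be the morphism $\a\sb b\sp{\Phi(A)}$ of Definition~\ref{alpha}. Substituting this into $s\sp{\Phi\sp{-1}}\sb{\Phi(A)}(b)=\Q(\Phi\sp{-1}(\a\sb b\sp{\Phi(A)})\circ\eta\sp{\Phi\sp{-1}})(x\sb 0)$ and using that $\Phi\sp{-1}$ is a functor gives $\Phi\sp{-1}(\a\sb b\sp{\Phi(A)})=\a\sb a\sp A\circ\Phi\sp{-1}(\eta\sp\Phi)$; and since $\eta\sp\Phi=\Phi(\eta\sp\Phi\sb 0)$ by Definition~\ref{main_epi}, we have $\Phi\sp{-1}(\eta\sp\Phi)=\eta\sp\Phi\sb 0$. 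Hence
$$c\sb A\sp\Phi(a)=\Q\bigl(\a\sb a\sp A\circ\eta\sp\Phi\sb 0\circ\eta\sp{\Phi\sp{-1}}\bigr)(x\sb 0).$$
Now $\eta\sp\Phi\sb 0\circ\eta\sp{\Phi\sp{-1}}$ is an endomorphism of $A\sb 0$ taking $x\sb 0$ to $w\sp\Phi(x\sb 0)$, while $\a\sb a\sp A$ is the homomorphism determined by $x\sb 0\mapsto a$; since homomorphisms commute with every term operation, $\Q(\a\sb a\sp A)(w\sp\Phi(x\sb 0))$ is exactly the value of the term $w\sp\Phi$ at $a$, i.e. $w\sp\Phi(a)$, which lies in the subalgebra of $A$ generated by $a$. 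This gives the first assertion $c\sb A\sp\Phi(a)=w\sp\Phi(a)$.

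For the last statement, assume $w\sp\Phi(x\sb 0)=x\sb 0$. Then $c\sb A\sp\Phi(a)=w\sp\Phi(a)=a$ for every $\C$-algebra $A$ and every $a\in\Q(A)$, so each $c\sb A\sp\Phi$ is the identity map. Moreover $w\sp\Phi(x\sb 0)=x\sb 0$ says that the endomorphism $\eta\sp\Phi\sb 0\circ\eta\sp{\Phi\sp{-1}}$ of $A\sb 0$ fixes the free generator $x\sb 0$, hence equals $1\sb{A\sb 0}$. Thus $\eta\sp{\Phi\sp{-1}}$, which is an epimorphism by part~3 of \lemref{mono_and_epi} applied to $\Phi\sp{-1}$, is also a split monomorphism, hence an isomorphism; therefore $\eta\sp\Phi\sb 0=(\eta\sp{\Phi\sp{-1}})\sp{-1}$ is an isomorphism and so is $\eta\sp\Phi=\Phi(\eta\sp\Phi\sb 0)$, and part~3 of \lemref{mono_and_epi} then yields that $\Phi$ is potentially inner.

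I expect the only real friction to be bookkeeping: keeping straight the four main epimorphisms $\eta\sp\Phi,\;\eta\sp\Phi\sb 0,\;\eta\sp{\Phi\sp{-1}},\;\eta\sp{\Phi\sp{-1}}\sb 0$ with their correct sources and targets so that each composite above type-checks, together with the two elementary categorical facts used at the end (an automorphism of a category preserves epimorphisms, and an epimorphism that is also split monic is an isomorphism). Everything else is a direct substitution using condition~1Q and the functoriality of $\Phi$ and $\Phi\sp{-1}$.
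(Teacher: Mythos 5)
Your proof is correct, and its computational core is the same as the paper's: unfold both occurrences of formula \eqref{s}, use functoriality of $\Phi^{-1}$ and the identity $\Phi^{-1}(\eta^{\Phi})=\eta^{\Phi}_0$ from Definition \ref{main_epi}, and land on $c_A^{\Phi}(a)=\Q\bigl(\a_a^A\circ\eta^{\Phi}_0\circ\eta^{\Phi^{-1}}\bigr)(x_0)=w^{\Phi}(a)$. You diverge from the paper in two places, both to your advantage. First, you identify $\Phi(\a_a^A)\circ\eta^{\Phi}$ with $\a_b^{\Phi(A)}$ (where $b=s_A^{\Phi}(a)$) directly from the uniqueness clause of condition 1Q; the paper instead reaches the same identification by evaluating $\a^{\Phi(A)}_{s_A^{\Phi}(a)}\circ\eta_0^{\Phi^{-1}}$ on a basis of $\Phi(A_0)$, a step you bypass entirely (you never need $\eta_0^{\Phi^{-1}}$). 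Second, for the final claim you argue that $w^{\Phi}(x_0)=x_0$ forces $\eta^{\Phi}_0\circ\eta^{\Phi^{-1}}=1_{A_0}$ by 1Q, so the epimorphism $\eta^{\Phi^{-1}}$ is also split monic, hence an isomorphism, hence $\eta^{\Phi}=\Phi(\eta^{\Phi}_0)$ is an isomorphism and \lemref{mono_and_epi}(3) applies; the paper instead asserts that all $c_A^{\Phi}=\mathrm{id}$ makes every $s_A^{\Phi}$ a bijection and concludes from there without detail. Your route makes the ``potentially inner'' conclusion rest explicitly on the already-proved \lemref{mono_and_epi}(3) rather than on an unelaborated bijectivity claim, so it is the more self-contained of the two.
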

\begin{proof}We have
$$c\sb A \sp {\Phi}(a)=s\sp {\Phi \sp {-1}}\sb {\Phi (A)}(s\sp {\Phi}\sb A (a))=\Phi \sp {-1}(\a \sp {\Phi (A)}\sb {s\sp {\Phi}\sb A (a)})\circ \eta \sp {\Phi \sp {-1}}(x\sb 0) =\Phi \sp {-1}(\a \sp {\Phi (A)}\sb {s\sp {\Phi}\sb A (a)}\circ \eta \sp {\Phi \sp {-1}}\sb 0) (x\sb 0).
$$
Now let calculate the mapping that is in parentheses. Let $x\sb 1, ... ,x\sb n$ be a basis of $\Phi (A)$. By definition $\eta \sp {\Phi \sp {-1}}\sb 0 (x\sb i )=x\sb 0$. Thus
\begin{gather*}
 \a \sp {\Phi (A)}\sb {s\sp {\Phi}\sb A (a)}\circ \eta \sp {\Phi \sp {-1}}\sb 0 (x\sb i)= \a \sp {\Phi (A)}\sb {s\sp {\Phi}\sb A (a)}(x\sb 0)=s\sp {\Phi}\sb A (a) =\Phi (\a\sp A \sb a)\circ \eta \sp {\Phi} (x\sb 0)=\\
=\Phi (\a\sp A \sb a \circ \eta \sp {\Phi} \sb 0) (x\sb 0)=\Phi (\a\sp A \sb a \circ \eta \sp {\Phi} \sb 0)\circ\eta \sp {\Phi \sp {-1}}\sb 0 (x\sb i).
\end{gather*}
Since $x\sb i$ is an arbitrary element of basis we obtain that 
$$\a \sp {\Phi (A)}\sb {s\sp {\Phi}\sb A (a)}\circ \eta \sp {\Phi \sp {-1}}\sb 0 =\Phi (\a\sp A \sb a \circ \eta \sp {\Phi} \sb 0)\circ\eta \sp {\Phi \sp {-1}}\sb 0.
$$
If we substitute the mapping in parentheses by the obtained value we get that
\begin{gather*}
 c\sb A \sp {\Phi}(a)=\Phi \sp {-1}(\Phi (\a\sp A \sb a \circ \eta \sp {\Phi} \sb 0)\circ\eta \sp {\Phi \sp {-1}}\sb 0) (x\sb 0)=\\
=\a\sp A \sb a \circ \eta \sp {\Phi} \sb 0\circ \Phi \sp {-1}(\eta \sp {\Phi \sp {-1}} \sb 0) (x\sb 0)=\a\sp A \sb a \circ \eta \sp {\Phi} \sb 0\circ \eta \sp {\Phi \sp {-1}}  (x\sb 0) =\a\sp A \sb a (w\sp {\Phi}(x\sb 0))=w\sp {\Phi}(a). 
\end{gather*}
Thus $c\sb A \sp {\Phi}(a)\in \langle a\rangle$. 

If all mappings $c\sb A \sp {\Phi}$ are identity mappings then all $s\sp {\Phi}\sb A$ are bijections and therefore $\Phi$ is a potentially inner automorphism.
\end{proof}

\begin{prop}
Let $A$ be a $\C$-algebra and $X$ be a basis of $A$. For any central function $(c\sb A)\sb {A\in Ob\C}$, either none of elements of $X$ belongs to $c\sb A(A)$ or  $c\sb A$ is surjective. 
\end{prop}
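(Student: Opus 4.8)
First I would show that any central function $(c_A)_{A\in Ob\C}$ is nothing but substitution into a single unary term. Put $w=c_{A_0}(x_0)\in A_0$ and regard it as a unary term $w(x_0)$. Given a $\C$-algebra $A$ and $a\in A$, feed the morphism $\a_a^A:A_0\to A$ of Definition~\ref{alpha} into the defining naturality square of Definition~\ref{cntr} and evaluate at $x_0$: this yields $c_A(a)=\Q(\a_a^A)(w)=w(a)$. Thus $c_A$ acts on every object by the same term $w$ (in particular $c_A(a)\in\langle a\rangle$); this is the analogue for an arbitrary central function of the identity $c_A^\Phi(a)=w^\Phi(a)$ established in \propref{c-work}, and it follows at once from Definition~\ref{cntr} alone, with no reference to any automorphism.

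Next I would transport the membership hypothesis to the monogenic object $A_0$. Assume some $x\in X$ lies in $c_A(A)$, say $c_A(a_0)=x$, i.e.\ $w(a_0)=x$. Since $X$ is a basis of $A$ and $\C$ is full, there is a $\C$-morphism $f:A\to A_0$ with $f(x)=x_0$ (send every element of $X$ to $x_0$). Applying naturality of $(c_A)$ to $f$ gives $c_{A_0}(f(a_0))=f(c_A(a_0))=f(x)=x_0$, so $x_0\in c_{A_0}(A_0)$. As $A_0$ is monogenic free, $f(a_0)=t(x_0)$ for some unary term $t$, and then Step~1 (applied to $A_0$) gives $x_0=c_{A_0}(t(x_0))=w(t(x_0))$ in $A_0$. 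Because $A_0$ is free on $\{x_0\}$, an equality of elements of $A_0$ is equivalent to an identity of $\V$, so $\V\models w(t(z))=z$.

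Finally I would read off the conclusion. For any $\C$-algebra $B$ and any $b\in B$, Step~1 together with the identity just obtained gives $c_B(t(b))=w(t(b))=b$, so $c_B$ is surjective — and this in fact holds for every object $B$ simultaneously, not just for $A$. Hence: if the image $c_A(A)$ meets the basis $X$, then $c_A$ is surjective; and otherwise $c_A(A)$ is disjoint from $X$. This is exactly the stated dichotomy.

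The one place that needs a moment's care is Step~1: one must be sure the formula $c_A(a)=w(a)$ is valid for an \emph{arbitrary} central function, and not merely for the distinguished central function of \propref{c-work}. But that is precisely what plugging $\mu=\a_a^A$ into the naturality square of Definition~\ref{cntr} and evaluating at $x_0$ shows. After that, the argument is just the observation that a basis of $A$ supplies enough homomorphisms between $A$ and $A_0$ to convert the membership hypothesis into an honest identity of the variety, which then forces surjectivity everywhere.
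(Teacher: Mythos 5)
Your proof is correct, but it takes a genuinely different route from the paper's. The paper argues entirely inside the single algebra $A$: given a basis element $x\in c_A(A)$ and an arbitrary $a\in A$, it picks an endomorphism $\mu$ of $A$ with $\mu(x)=a$ and uses centrality only for this one endomorphism, getting $a\in\mu(c_A(A))=c_A(\mu(A))\subseteq c_A(A)$ in one line. You instead route everything through the monogenic object $A_0$: you first prove that an arbitrary central function is substitution into the single unary term $w=c_{A_0}(x_0)$ (the analogue, for arbitrary central functions, of what Proposition \ref{c-work} shows for the main central function), then convert the hypothesis $x\in c_A(A)$ into the identity $\V\models w(t(z))=z$ via the retraction $A\to A_0$, and read off surjectivity from that identity. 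Your argument is longer, but it buys more: it isolates the structural fact $c_A(a)=w(a)\in\langle a\rangle$ for every central function, it produces an explicit term $t$ witnessing surjectivity, and it yields the stronger conclusion that \emph{every} component $c_B$, not just $c_A$, is surjective once $c_A(A)$ meets a basis. The paper's proof is the more economical one and needs neither $A_0$ nor the passage to identities of $\V$; both are valid, and all the ingredients you use (fullness of $\C$, $A_0\in Ob\,\C$, condition 1Q, freeness of $A$ on $X$) are available in the paper's setting.
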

\begin{proof}
Let $x \in c\sb A(A)$ for some $x\in X$ and $a\in A$. Let $\mu$ be an endomorphism of $A$ such that $\mu (x) =a$. Then $a\in \mu (c\sb A (A))$.  Since $c\sb A$ is central we have $a\in c\sb A (\mu (A))$ that implies $a\in c\sb A (A)$. Thus $c\sb A$ is surjective because $a$ is an arbitrary element of $A$.
\end{proof}

We show that the same fact takes place for $s\sp {\Phi} \sb A$. Hereinafter we write $s\sb A$ instead of $s\sp {\Phi}\sb A$ if it is clear what we have in mind.

\begin{prop}\label{notsurjective}
Let $A$ be a $\C$-algebra and $X$ be a basis of $\Phi (A)$. Then either none of elements of $X$ belongs to $s\sb A (A)$ or  $s\sb A$ is surjective. 
\end{prop}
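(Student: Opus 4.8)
\emph{Proof proposal.} The plan is to run the same argument as in the preceding proposition, with the centrality identity $c_B\circ\Q(\mu)=\Q(\mu)\circ c_A$ replaced by the commuting square of Definition~\ref{potentially}, $\Q(\Phi(\mu))\circ s_A=s_B\circ\Q(\mu)$, specialized to endomorphisms. So first I would assume that some basis element $x\in X$ does lie in $s_A(A)$ and fix an element $a\in A$ with $s_A(a)=x$. It then suffices to show that an arbitrary $b\in\Phi(A)$ is in the image of $s_A$.

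Since $X$ is a basis of $\Phi(A)$, there is an endomorphism $\nu$ of $\Phi(A)$ with $\nu(x)=b$ (prescribe $\nu$ on $X$ arbitrarily away from $x$ and use the universal property of the free algebra). Because $\Phi$ is an automorphism of $\C$, it restricts to a bijection from the endomorphism monoid of $A$ onto that of $\Phi(A)$, so $\mu:=\Phi^{-1}(\nu)$ is an endomorphism of $A$ with $\Phi(\mu)=\nu$. Applying the commuting square of Definition~\ref{potentially} to $\mu:A\to A$ gives $\Q(\nu)\circ s_A=s_A\circ\Q(\mu)$; evaluating both sides at $a$ yields $b=\nu(x)=\Q(\nu)(s_A(a))=s_A(\Q(\mu)(a))\in s_A(A)$. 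As $b$ was arbitrary, $s_A$ is surjective, which gives the dichotomy in the statement.

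The reasoning is purely formal once the correct diagram is invoked, so I do not expect a genuine obstacle here. The only two places where the standing hypotheses enter are: (i) that every endomorphism of $\Phi(A)$ is $\Phi$ of an endomorphism of $A$, which is immediate from $\Phi$ being an automorphism of $\C$; and (ii) that $\Phi(A)$ is free on $X$, which is exactly what allows choosing $\nu$ with a prescribed value on a single basis element. Both are harmless, so the proposition should follow in a few lines; the slightly delicate point, if any, is simply to record that $\Q(\nu)$ acts as the set map $\nu$ and $s_A(a)=x$, so that $\Q(\nu)(s_A(a))=b$.
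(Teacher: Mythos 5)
Your proposal is correct and is essentially the paper's own proof: the paper also takes an endomorphism $\mu$ of $\Phi(A)$ sending the basis element $x=s_A(a)$ to an arbitrary $u\in\Phi(A)$, applies the commuting square $s_A\circ\Phi^{-1}(\mu)=\mu\circ s_A$, and evaluates at $a$ to conclude $u\in s_A(A)$. Your version differs only in notation (calling the endomorphism $\nu$ and writing $\mu=\Phi^{-1}(\nu)$) and in making the appeal to freeness and to $\Q$ explicit.
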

\begin{proof}
Let $x\in s\sb A (A)$ for some  $x\in X$. Let $u$ be an arbitrary element of $\Phi (A)$. There is an endomorphism $\mu$ of 
 $\Phi (A)$ such that $\mu (x)=u$. We have the equation $s\sb A \circ \Phi \sp{-1} (\mu) =\mu  \circ s\sb A $. Since $x\in s\sb A (A)$ there is $a\in A$ such that $x=s\sb A (a)$. We have $s\sb A ( \Phi \sp{-1} (\mu) (a))=\mu (s\sb A (a)) =\mu  (x)=u$. Thus $u\in s\sb A (A)$. Since $u$ is an arbitrary element of $\Phi (A)$, $s\sb A $ is surjective.
\end{proof}

Let $X=\{x\sb 1,\dots ,x\sb n\}$ be a basis of $\C$-algebra $A$. Let $y\sb i=s\sb A\sp {\Phi }(x\sb i)$ for $i=1,\dots , n$. 
Since $s\sb A \sp {\Phi }$ is injective all elements of the set $Y=\{y\sb 1,\dots ,y\sb n\}$ are pairwise  different. One the other hand, \propref{notsurjective} declares that none of elements of $Y$  belongs to any basis of $\Phi (A)$.  Nevertheless
the set $Y$ has the following interesting property. 
\begin{prop}\label{uniq}
Let $B$ be a $\C$-algebra and $\b, \g :\Phi (A) \to B$ be two homomorphisms.  If $\b (y\sb i)=\g (y\sb i)$ for all $i=1,\dots , n$ then $\b =\g$.
\end{prop}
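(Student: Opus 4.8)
The plan is to reduce everything to the fact, established in \thmref{allAuto}, that the main function $s\sb A$ is a $\C$-bijection: by Definition \ref{bijection} this means precisely that for any pair of $\C$-morphisms out of $\Phi (A)$, here $\b ,\g :\Phi (A)\to B$, the equality $\Q (\b )\circ s\sb A =\Q (\g )\circ s\sb A$ already forces $\b =\g$. So it suffices to show that $\b$ and $\g$ agree after precomposition with $s\sb A$, i.e. that $\b (s\sb A (a))=\g (s\sb A (a))$ for every $a\in A$.

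To produce this equality I would pass through $\Phi \sp {-1}$. Since $\Phi$ is an automorphism, $\Phi \sp {-1}\Phi$ is the identity functor, so $\Phi \sp {-1}(\b )$ and $\Phi \sp {-1}(\g )$ are honest homomorphisms $A=\Phi \sp {-1}(\Phi (A))\to \Phi \sp {-1}(B)$. Applying the commutative square of Definition \ref{potentially} to the morphism $\Phi \sp {-1}(\b ):A\to \Phi \sp {-1}(B)$ (whose image under $\Phi$ is $\b$) gives $\Q (\b )\circ s\sb A =s\sb {\Phi \sp {-1}(B)}\circ \Q (\Phi \sp {-1}(\b ))$, and likewise for $\g$. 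Now the hypothesis $\b (y\sb i)=\g (y\sb i)$ together with $y\sb i=s\sb A (x\sb i)$ says exactly that $\Q (\b )\circ s\sb A$ and $\Q (\g )\circ s\sb A$ take the same value on each basis element $x\sb i$ of $A$; hence $s\sb {\Phi \sp {-1}(B)}\circ \Q (\Phi \sp {-1}(\b ))$ and $s\sb {\Phi \sp {-1}(B)}\circ \Q (\Phi \sp {-1}(\g ))$ agree on $x\sb 1,\dots ,x\sb n$. Since $s\sb {\Phi \sp {-1}(B)}$ is injective (again by \thmref{allAuto}), the homomorphisms $\Phi \sp {-1}(\b )$ and $\Phi \sp {-1}(\g )$ agree on the basis $X$, so they coincide, and applying $\Phi$ yields $\b =\g$.

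I do not expect a genuine obstacle here; the only point requiring care is that the detour through $\Phi \sp {-1}$ is essential. One cannot argue with $\Q (\b )\circ s\sb A$ directly on the basis $\{x\sb 1,\dots ,x\sb n\}$, because $s\sb A$ need not be a homomorphism, so $\Q (\b )\circ s\sb A$ is merely a map of sets and its values on a basis do not determine it. Rewriting it as $s\sb {\Phi \sp {-1}(B)}\circ \Q (\Phi \sp {-1}(\b ))$ moves the non-homomorphic factor $s$ to the outside, where its injectivity --- rather than any algebraic property --- is all that is needed, and turns the problem into the trivial statement that two homomorphisms out of a free algebra agreeing on a basis are equal.
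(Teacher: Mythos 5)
Your proof is correct, but it takes a somewhat different route than the paper's. The paper unwinds the definition of $s_A$: writing $y_i=s_A(x_i)=\Q(\Phi(\a_{x_i})\circ\eta)(x_0)$, it uses property 1Q to upgrade the pointwise equality $\b(y_i)=\g(y_i)$ to an equality of morphisms $A_0\to B$, cancels the epimorphism $\eta$, applies $\Phi^{-1}$, and finishes with the fact that $X$ is a basis of $A$. You instead cite two facts already established in \thmref{allAuto}: the naturality square applied to $\mu=\Phi^{-1}(\b)$ (which indeed gives $\Q(\b)\circ s_A=s_{\Phi^{-1}(B)}\circ\Q(\Phi^{-1}(\b))$, and likewise for $\g$, since $\Phi\circ\Phi^{-1}$ is the identity) and the injectivity of $s_{\Phi^{-1}(B)}$; your endgame --- two homomorphisms $A\to\Phi^{-1}(B)$ agreeing on the basis $X$ must coincide, and faithfulness of $\Phi$ then gives $\b=\g$ --- is the same as the paper's. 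So your argument is a clean repackaging: where the paper redoes the computation from the defining formula for $s$ (representability 1Q plus the epimorphism property of $\eta$), you quote the already-proved naturality and injectivity, which is arguably tidier and makes clear that only these two properties of the main function are needed. One small remark: the reduction announced in your first paragraph, via the $\C$-bijection (epimorphism-like) property of $s_A$, is never actually used --- your second paragraph establishes $\Phi^{-1}(\b)=\Phi^{-1}(\g)$ outright and hence $\b=\g$ directly --- so that paragraph could be deleted without loss.
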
 
\begin{proof}
We have $\b (y\sb i)=\b (s\sb A (x\sb i)) =\b (\Phi (\a \sb {x\sb i})\circ \eta (x\sb 0))=\b \circ \Phi (\a \sb {x\sb i})\circ \eta (x\sb 0)$. The same we have for $\g$: $\g (y\sb i)=\g \circ \Phi (\a \sb {x\sb i})\circ \eta (x\sb 0)$. Under the suggestion, we have 
$$\b \circ \Phi (\a \sb {x\sb i})\circ \eta (x\sb 0)=\g \circ \Phi (\a \sb {x\sb i})\circ \eta (x\sb 0),
$$
which implies 
$$\b \circ \Phi (\a \sb {x\sb i}) =\g \circ \Phi (\a \sb {x\sb i})
$$
and then 
$$\Phi (\Phi {-1}(\b) \circ \a \sb {x\sb i}) =\Phi (\Phi {-1}(\g) \circ \a \sb {x\sb i}).
$$
Since $X$ is a basis of $A$ then after some obvious steps we obtain  that $\b =\g $.
\end{proof}
  
Now our aim is to describe images of the main function, that is, the sets $ s\sb A\sp {\Phi} (A)$ for every object $A$. 
\begin{thm}\label{image}
There exists a term ${\bf t}(x\sb 0) $ containing only one variable $x\sb 0$ such that for every $\C$-algebra $A$ the following 
expression takes place:
$$ s\sb A \sp {\Phi} (A) =\{{\bf t}(u) \vert u\in \Phi (A)\}.$$
\end{thm}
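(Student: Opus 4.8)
The plan is to extract the term $\mathbf{t}$ from the behaviour of the main function on the monogenic object $A_0$ and then transport it to an arbitrary $\C$-algebra by functoriality. First I reduce $s^{\Phi}_A(A)$ to data that involve only $A_0$ and $\Phi(A_0)$. Put $e:=s^{\Phi}_{A_0}(x_0)=\Q(\eta^{\Phi})(x_0)\in\Q(\Phi(A_0))$. By the defining formula \eqref{s} and Definition~\ref{alpha}, $s^{\Phi}_A(a)=\Q(\Phi(\a_a^A))(e)$ for every $a\in\Q(A)$, so $s^{\Phi}_A(A)=\{\Q(\Phi(\mu))(e):\mu\in\Hom(A_0,A)\}$. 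Since $\Phi$ is a bijection on objects and on hom-sets and, by property 1Q, every morphism $A_0\to A$ equals $\a_a^A$ for a unique $a$, this says that for every $\C$-algebra $B$
\[
s^{\Phi}_{\Phi^{-1}(B)}(\Phi^{-1}(B))=\{\Q(\psi)(e):\psi\in\Hom(\Phi(A_0),B)\}.
\]
Hence it suffices to find one unary term $\mathbf{t}(x_0)$ with $\{\Q(\psi)(e):\psi\in\Hom(\Phi(A_0),B)\}=\{\mathbf{t}(u):u\in B\}$ for all $B\in\Ob\C$; note that $\{\mathbf{t}(u):u\in B\}=\{\Q(\a)(\mathbf{t}(x_0)):\a\in\Hom(A_0,B)\}$ by property 1Q, so both sides are ``images of pointed objects'' inside the forgetful functor.

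The second step isolates what must be produced. I claim the theorem follows as soon as one exhibits a unary term $\mathbf{t}$, an element $g\in\Q(\Phi(A_0))$ and a homomorphism $\psi_0:\Phi(A_0)\to A_0$ with
\[
e=\mathbf{t}(g)\ \text{ in }\ \Phi(A_0),\qquad \Q(\psi_0)(e)=\mathbf{t}(x_0).
\]
Indeed, the first equation gives, for every $\psi:\Phi(A_0)\to B$, $\Q(\psi)(e)=\mathbf{t}(\Q(\psi)(g))$, which is the inclusion ``$\subseteq$'' for every $B$; the second gives, for every $\a:A_0\to B$, $\Q(\a)(\mathbf{t}(x_0))=\Q(\a\psi_0)(e)$, which is ``$\supseteq$'' for every $B$. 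Applying $\psi_0$ to $e=\mathbf{t}(g)$ also shows $\mathbf{t}(x_0)=\mathbf{t}(\Q(\psi_0)(g))$, so once $\mathbf{t}$ is known to act injectively on $A_0$ one may even require $g$ to be unimodular, i.e. $\Q(\psi_0)(g)=x_0$.

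For the construction, fix a basis $x_1,\dots,x_m$ of the free algebra $\Phi(A_0)$ and view $e$ as a term $e(x_1,\dots,x_m)$; then $\{\Q(\psi)(e):\psi\in\Hom(\Phi(A_0),B)\}$ is exactly the range of the term operation of $e$ on $B$, and \propref{uniq} (applied with $A=A_0$, whose basis is $\{x_0\}$, so $Y=\{e\}$) shows this term operation is injective on every $B\in\Ob\C$. This injectivity, together with the invertibility of $\Phi$, is what lets the $m$ basis variables of $\Phi(A_0)$ be contracted to one: using the companion data of $\Phi^{-1}$ — the back-morphisms $\eta^{\Phi^{-1}},\eta_0^{\Phi^{-1}}$ and the central functions $c^{\Phi},c^{\Phi^{-1}}$ of \propref{c-work} — one builds $\psi_0$ (essentially out of $\eta_0^{\Phi^{-1}}$), sets $\mathbf{t}(x_0):=\Q(\psi_0)(e)$, and uses the injectivity to solve $e=\mathbf{t}(g)$ for a unimodular $g$. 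It is convenient to dispose first of the alternative in \propref{notsurjective}: when $s^{\Phi}_A$ is onto, $s^{\Phi}_A(A)=|\Phi(A)|$ and $\mathbf{t}(x_0)=x_0$ serves for that $A$, so all the content lies in the case where $s^{\Phi}_A$ omits every basis element of $\Phi(A)$, and one must check that the same $\mathbf{t}$ works in both cases.

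I expect the main obstacle to be precisely this contraction in the third step: proving that the a priori $m$-ary datum $(\Phi(A_0),e)$ cuts out the same subset of every $\C$-algebra as a single element of $A_0$ — equivalently, that $e=\mathbf{t}(g)$ is solvable with $g$ unimodular. The only leverage is (i) that $\Phi$ is an automorphism, so that $\Phi^{-1}$, the morphisms $\eta_0^{\Phi^{-1}}$ and the $c$-identities of \propref{c-work} are available, and (ii) the $\C$-bijectivity of $s^{\Phi}_{A_0}$ from \thmref{allAuto}, i.e. the injectivity of the term operation of $e$. Concretely I would analyse $\End(\Phi(A_0))$ through the isomorphism induced by $\Phi$ (it behaves like a ``matrix monoid'' over $\End(A_0)$), and show that the identity of $\Phi(A_0)$ factors, up to such matrices, through $e$; this is what simultaneously produces $\mathbf{t}$, $g$ and $\psi_0$.
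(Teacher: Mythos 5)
Your first two steps are fine: formula \eqref{s} does give $s^{\Phi}_A(a)=\Q(\Phi(\a_a^A))(e)$ with $e=\Q(\eta^{\Phi})(x_0)$, and since $\Phi$ is bijective on hom-sets your reformulation of $s^{\Phi}_A(A)$ as $\{\Q(\psi)(e):\psi:\Phi(A_0)\to\Phi(A)\}$ is correct, as is the sufficiency of the two equations $e=\mathbf{t}(g)$ and $\Q(\psi_0)(e)=\mathbf{t}(x_0)$. But the proof stops exactly where the content of the theorem lies: you never produce the unary term $\mathbf{t}$, the element $g$, or the morphism $\psi_0$. The third step is only a programme ("one builds $\psi_0$ \dots, uses the injectivity to solve $e=\mathbf{t}(g)$ \dots, I would analyse $\End(\Phi(A_0))$ as a matrix monoid and show the identity factors through $e$"), and you yourself flag this contraction of the $m$ variables of $e\in\Phi(A_0)$ to one variable as the main obstacle. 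Neither \propref{uniq} (injectivity of $\psi\mapsto\Q(\psi)(e)$) nor the central-function identities of \propref{c-work} are shown to yield such a factorization, so the argument as written is incomplete.

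The gap disappears once one evaluates the main function at the right object, which is what the paper does: take a basis element $x$ of $\Phi^{-1}(A_0)$ and set $\mathbf{t}(x_0):=s^{\Phi}_{\Phi^{-1}(A_0)}(x)$. Since $\Phi(\Phi^{-1}(A_0))=A_0$ is the \emph{monogenic} free algebra, this value is automatically an element of $A_0$, i.e.\ a term in the single variable $x_0$ --- no contraction problem ever arises. Then for $a\in A$ choose $\mu:\Phi^{-1}(A_0)\to A$ with $\mu(x)=a$; naturality of $s^{\Phi}$ gives $s^{\Phi}_A(a)=\Q(\Phi(\mu))(\mathbf{t}(x_0))=\mathbf{t}(\Q(\Phi(\mu))(x_0))$, and conversely every $u\in\Phi(A)$ arises as $\Q(\Phi(\mu))(x_0)$ for some such $\mu$ because $\Phi$ is surjective on hom-sets, giving $\mathbf{t}(u)\in s^{\Phi}_A(A)$. (In your own scheme this trick is precisely what furnishes the missing data: one gets $e=\mathbf{t}(e)$, so $g=e$ works, and $\psi_0=\Phi(\a_x^{\Phi^{-1}(A_0)})$ satisfies $\Q(\psi_0)(e)=\mathbf{t}(x_0)$.) Working instead from $e=s^{\Phi}_{A_0}(x_0)\in\Phi(A_0)$, as you do, forces you to prove a genuinely nontrivial statement about ranges of $m$-ary term operations, and that statement is left unproved; also the case split suggested by \propref{notsurjective} is unnecessary for this theorem.
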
  
\begin{proof}
Let $x$ be a member of some basis of the algebra $\Phi  \sp {-1} (A\sb 0)$. Then $s\sb {\Phi  \sp {-1} (A\sb 0)}\sp {\Phi}(x)$ is an element of $A\sb 0$ which we consider as a term ${\bf t}(x\sb 0) $. Let $a\in A$ and $\mu :\Phi  \sp {-1} (A\sb 0) \to A$  be a homomorphism such that $\mu (x) = a$.
We have the following commutative diagram:
$$
\CD 
\Phi  \sp {-1} (A\sb 0) @> s\sb {\Phi  \sp {-1} (A\sb 0)}\sp {\Phi}>>A\sb 0\\
@V\mu VV        @VV\Phi (\mu) V \\
A @> s\sb A\sp {\Phi} >> \Phi (A)
\endCD
$$
Hence $s\sb A\sp {\Phi}(a)=\Phi (\mu) ( s\sb {\Phi  \sp {-1} (A\sb 0)}\sp {\Phi}(x))=\Phi (\mu) ({\bf t}(x\sb 0)) ={\bf t}(\Phi (\mu) (x\sb 0))$. 
Thus there exists an element $u\in \Phi (A)$ such that $s\sb A\sp {\Phi} (a)={\bf t}(u)$. It is obvious that  the element ${\bf t}(u)$ belongs to $s\sb A \sp {\Phi}(A)$ for every $u\in \Phi (A)$.
\end{proof}

%Full description of automorphisms of categories of free algebras$
\section{Full description of automorphisms of categories of free algebras.}\label{description}
The object of this section is to give a description of automorphisms of arbitrary categories which are subcategories of the category $\Th \sp 0 ({\V})$ and which contain the object $A\sb 0 $. To this end,  we select two kinds of functors.
Because of the fact that we may have two different algebraic structures on the same set it makes sense to use different notations for an algebra and for its underlying set, namely,  $A$ and $|A|$  correspondingly. 
\begin{defn}\label{functors}
Let $\C$ and $\D$ be full subcategories of the category $\Th \sp 0 ({\V})$. 
\item{1.} An isomorphism $\Psi :\C \to \D$ is called inner if there is a family of isomorphisms $(\st\sb {A} :A\to \Psi (A))\sb {A\in Ob \C}$ such that $\Psi (\mu) =\st\sb B \circ \mu \circ \st\sb A \sp {-1}$ for every morphism $\mu :A\to B$ of the category $\C$. That is,  $\Psi$ is isomorphic to the embedding functor $Id\sb {\C} \sp{\Th \sp 0 ({\V})} :\C \to \Th \sp 0 ({\V})$.
\item{2.} An injective functor $\G :\C \to \D$ is called an extension functor if 
\item{(i)} $|A| \subseteq |\G (A)|$ for every $\C$-object $A$,
\item{(ii)} $\mu \subseteq \G (\mu)$ for every $\C$-morphism $\mu$,
\item{(iii)} if $\nu :\G (A) \to \G (B)$, $\mu :A\to B$ and $\mu \subseteq \nu$ then $\nu =\G (\mu)$  for any $\C$-objects $A,B$ and morphisms $\mu ,\nu$.
\end{defn}
Below we show that any automorphism of a category $\C$ under consideration is a product of two functors the first of which is an inner isomorphism and the second one is an extension functor. 

Let $\Phi$ be an automorphism  of a given category $\C$.  We follow an idea used in \cite{ZhPlVar,ZhPlUnivAlg}. Since $s\sp {\Phi}\sb A :|A|\to |\Phi (A)|$ and $s\sp {\Phi \sp{-1}}\sb A : |A|\to |\Phi \sp {-1} (A)|$ are  $\C$-bijections they determine algebraic structures on the sets $\Im s\sp {\Phi}\sb A=s\sp {\Phi}\sb A |A|) \subseteq |\Phi (A)|$ and $\Im s\sp {\Phi \sp{-1}}\sb A=s\sp {\Phi \sp{-1}}\sb A (|A|) \subseteq |\Phi \sp {-1} (A)|$ correspondingly. Let $\om$ be a symbol of a $k$-ary operation and $\om \sb A$ be the corresponding operation of $A$. Define  new $\om$-operations on $\Im s\sp {\Phi}\sb A $ and $\Im s\sp {\Phi \sp{-1}}\sb A $ by setting for all $a\sb 1,\dots ,a\sb k \in |A|$
$$\om\sb A \sp * (s\sp {\Phi}\sb A (a\sb 1),\dots , s\sp {\Phi}\sb A (a\sb k)) =s\sp {\Phi}\sb A (\om \sb A (a\sb 1,\dots ,a\sb k)),$$
$$\om\sb A \sp \# (s\sp {\Phi \sp{-1}}\sb A (a\sb 1),\dots , s\sp {\Phi \sp{-1}}\sb A (a\sb k)) =s\sp {\Phi \sp{-1}}\sb A (\om \sb A (a\sb 1,\dots ,a\sb k)).$$

The set $\Im s\sb A \sp {\Phi}$ supplied with the operations $\om\sb A \sp *$  for all $\om \in \Om$ is an $\Om$-algebra which we denote by $A\sp *$. It is obvious that the bijective mapping $(s\sb A \sp {\Phi}) \sp*: |A|\to \Im s\sb A \sp {\Phi}$ which is the mapping $s\sb A \sp {\Phi}: |A| \to |\Phi (A)|$ considered as the mapping onto $\Im s\sb A \sp {\Phi}$ is an isomorphism $(s\sb A\sp {\Phi}) \sp*: A\to A\sp *$ . Thus $A\sp *$ and $A$ are isomorphic.  Similarly we have the algebra $A\sp \# $ on the set $\Im s\sp {\Phi \sp{-1}}\sb A$ and the isomorphism $s\sp\#\sb A: A\to A\sp\#$.
\begin{thm}\label{restriction}
Let $\mu : \Phi (A) \to \Phi (B)$ be a homomorphism. The restriction of $\mu$ on $A \sp *$ is a homomorphism  $A \sp * \to B \sp *$. Backwards, any homomorphism $\nu : A \sp * \to B \sp *$ is a restriction of exactly one homomorphism $\mu : \Phi (A)\to \Phi (B)$.
\end{thm}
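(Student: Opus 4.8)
The plan is to exploit the fact that $\Phi$ is an automorphism to pass between $\Hom(\Phi(A),\Phi(B))$ and $\Hom(A,B)$ via $\Phi^{-1}$, and to combine this with the defining commutativity of the main function $s^\Phi$ coming from Theorem \ref{allAuto} and Definition \ref{potentially}: for every morphism $\lambda:A\to B$ one has $\Q(\Phi(\lambda))\circ s_A^\Phi=s_B^\Phi\circ\Q(\lambda)$, i.e. $\Phi(\lambda)(s_A^\Phi(a))=s_B^\Phi(\lambda(a))$ for all $a\in|A|$. Since $\C$ is a full subcategory of $\Th^0(\V)$, a set-map respecting the operations is automatically a $\C$-morphism, so I move freely between "homomorphism" and "$\C$-morphism".

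For the first assertion, let $\mu:\Phi(A)\to\Phi(B)$ and put $\lambda:=\Phi^{-1}(\mu):A\to B$, so that $\mu=\Phi(\lambda)$. The commutativity above gives $\mu(s_A^\Phi(a))=s_B^\Phi(\lambda(a))$ for every $a\in|A|$; in particular $\mu$ carries $\Im s_A^\Phi=|A^*|$ into $\Im s_B^\Phi=|B^*|$. To see that the restriction $\mu|_{A^*}$ preserves a $k$-ary operation $\om$, one uses the definition of $\om_A^*$, then that $\lambda$ is a homomorphism $A\to B$, then the definition of $\om_B^*$:
$$\mu\bigl(\om_A^*(s_A^\Phi(a_1),\dots,s_A^\Phi(a_k))\bigr)=s_B^\Phi\bigl(\om_B(\lambda(a_1),\dots,\lambda(a_k))\bigr)=\om_B^*\bigl(\mu(s_A^\Phi(a_1)),\dots,\mu(s_A^\Phi(a_k))\bigr),$$
so $\mu|_{A^*}:A^*\to B^*$ is a homomorphism.

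For the converse, given a homomorphism $\nu:A^*\to B^*$, set $\lambda:=\bigl((s_B^\Phi)^*\bigr)^{-1}\circ\nu\circ(s_A^\Phi)^*:A\to B$; this is a composite of homomorphisms, since $(s_A^\Phi)^*:A\to A^*$ and $(s_B^\Phi)^*:B\to B^*$ are isomorphisms. Put $\mu:=\Phi(\lambda):\Phi(A)\to\Phi(B)$. Applying the main-function commutativity once more, for every $a\in|A|$ we get $\mu(s_A^\Phi(a))=s_B^\Phi(\lambda(a))=(s_B^\Phi)^*(\lambda(a))=\nu(s_A^\Phi(a))$, so $\mu$ restricts to $\nu$ on $A^*$, proving existence. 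For uniqueness, suppose $\mu_1,\mu_2:\Phi(A)\to\Phi(B)$ both restrict to $\nu$; then they agree on all of $\Im s_A^\Phi$, in particular on the elements $y_i=s_A^\Phi(x_i)$ attached to a basis $X=\{x_1,\dots,x_n\}$ of $A$, and Proposition \ref{uniq} forces $\mu_1=\mu_2$.

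The only genuinely delicate point is this uniqueness clause. Because $s_A^\Phi$ need not be surjective (Proposition \ref{notsurjective}), the set $\Im s_A^\Phi$ contains no basis of $\Phi(A)$, so one cannot invoke the naive "a homomorphism is determined by its values on generators". It is precisely Proposition \ref{uniq}, asserting that two homomorphisms out of $\Phi(A)$ agreeing on the $y_i$ already coincide, that rescues the statement; the rest is routine bookkeeping with the commutative square of Definition \ref{potentially} and with the definitions of $\om_A^*$ and $\om_B^*$.
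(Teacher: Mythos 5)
Your proposal is correct and follows essentially the same route as the paper: use the commutative square $s_B^\Phi\circ\Q(\Phi^{-1}(\mu))=\Q(\mu)\circ s_A^\Phi$ to see the restriction lands in $|B^*|$, transport along the isomorphisms $(s_A^\Phi)^*$, $(s_B^\Phi)^*$ to pass between homomorphisms $A^*\to B^*$ and $A\to B$ (your elementwise verification of operation-preservation is just an unpacking of the paper's composite $s_B^*\circ\Phi^{-1}(\mu)\circ(s_A^*)^{-1}$), and settle uniqueness by Proposition \ref{uniq}, exactly as the paper does.
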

\begin{proof}
Let $\mu : \Phi (A) \to \Phi (B)$. We have $s\sp {\Phi}\sb B \circ \Phi \sp {-1} (\mu) = \mu \circ s\sp {\Phi}\sb A$. Then 
$ \mu (s\sp {\Phi}\sb A (A)) \subseteq s\sp {\Phi}\sb B (B)$, and it is correct to consider the restriction of $\mu$ on $|A \sp *|$ as a mapping $|A \sp *|\to  |B \sp *|$. Denoting this restriction by $\nu$ we get  $s\sp*\sb B \circ \Phi \sp {-1} (\mu) = \nu \circ s\sp *\sb A$. Consequently, $\nu =s\sp*\sb B \circ  \Phi \sp {-1} (\mu) \circ (s\sp *\sb A)\sp {-1}$ and hence $\nu$ is a homomorphism $ A \sp * \to B \sp *$. 

Backwards, let $\nu : A \sp * \to B \sp *$. Then $\g =(s\sp*\sb B )\sp {-1}\circ  \nu \circ s\sp *\sb A$  is a homomorphism $\g :A\to B$. Let $\mu =\Phi (\g)$. It is obvious that $\mu :\Phi (A) \to \Phi (B)$ and $\nu \subseteq \mu$. The uniqueness of such homomorphism follows from \propref{uniq}.
\end{proof}
The next fact may be useful in some cases.
\begin{prop}\label{condQuasi}
An automorphism $\Phi$ of the category $\C$ is quasi-inner if and only if there exists a central function $(c\sb A)\sb {A\in Ob\C}$ such that for each $ \C$-object $A$ the mapping $c\sb A$ is a bimorphism $A\to \Phi (A)\sp \# $.
\end{prop}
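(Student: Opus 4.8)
The plan is to reduce both sides of the equivalence to the single statement that $\Phi$ admits a natural family of bimorphisms $A\to\Phi (A)$, which for the left-hand side is just the definition of quasi-inner. For the right-hand side, recall from \secref{description} the isomorphism $s\sp \#\sb{\Phi (A)}:\Phi (A)\toeq\Phi (A)\sp \#$; it is nothing but the $\C$-bijection $t\sb A:=s\sp{\Phi\sp{-1}}\sb{\Phi (A)}:|\Phi (A)|\to|\Phi\sp{-1}(\Phi (A))|=|A|$ with codomain cut down to its image $|\Phi (A)\sp \#|$. Since composing a bimorphism with an isomorphism yields a bimorphism, the assignments $\sigma\sb A\mapsto c\sb A:=s\sp \#\sb{\Phi (A)}\circ\sigma\sb A$ and $c\sb A\mapsto\sigma\sb A:=(s\sp \#\sb{\Phi (A)})\sp{-1}\circ c\sb A$ are mutually inverse bijections between object-indexed families of bimorphisms $A\to\Phi (A)$ and families of bimorphisms $A\to\Phi (A)\sp \#$. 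On underlying sets this reads $c\sb A=t\sb A\circ\Q(\sigma\sb A)$, equivalently $\Q(\sigma\sb A)=t\sb A\sp{-1}\circ c\sb A$, where $t\sb A\sp{-1}$ is the partial inverse of the injection $t\sb A$, defined on $|\Phi (A)\sp \#|=\Im t\sb A\supseteq\Im c\sb A$.

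The core of the argument is then to check that, under this dictionary, the naturality condition $\Q(\Phi (\mu))\circ\Q(\sigma\sb A)=\Q(\sigma\sb B)\circ\Q(\mu)$ for every $\mu :A\to B$ is equivalent to the centrality condition $c\sb B\circ\Q(\mu)=\Q(\mu)\circ c\sb A$. The only structural input needed beyond bookkeeping is the naturality of the main function of $\Phi\sp{-1}$: applying \thmref{allAuto} to $\Phi\sp{-1}$ and to the morphism $\Phi (\mu):\Phi (A)\to\Phi (B)$ gives $\Q(\mu)\circ t\sb A=t\sb B\circ\Q(\Phi (\mu))$. Using this together with the injectivity of $t\sb B$, one obtains the pointwise identity $\Q(\Phi (\mu))\circ t\sb A\sp{-1}=t\sb B\sp{-1}\circ\Q(\mu)$ on $|\Phi (A)\sp \#|$, hence
$$\Q(\Phi (\mu))\circ\Q(\sigma\sb A)=\Q(\Phi (\mu))\circ t\sb A\sp{-1}\circ c\sb A=t\sb B\sp{-1}\circ\Q(\mu)\circ c\sb A,$$
which coincides with $\Q(\sigma\sb B)\circ\Q(\mu)=t\sb B\sp{-1}\circ c\sb B\circ\Q(\mu)$ precisely when $\Q(\mu)\circ c\sb A=c\sb B\circ\Q(\mu)$. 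Reading this chain of equalities in either direction proves both implications; at the final step one invokes that $\Q$ is faithful, so that the commuting square of set maps is a commuting square in $\C$, i.e. $(\sigma\sb A):Id\sp\C\to\Phi$ is a bimorphism, which is exactly what it means for $\Phi$ to be quasi-inner.

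I expect the main obstacle to be the careful handling of $\Phi (A)\sp \#$: it is an $\Om$-algebra whose carrier is merely a subset of $|A|$ and which is not literally an object of $\C$, so one must keep precise track of where each map is defined — in particular that $t\sb A\sp{-1}$ lives only on $\Im t\sb A$, and that $\Im c\sb A$ sits inside it because $c\sb A$ factors through $\Phi (A)\sp \#$ — and one must confirm that ``$c\sb A$ is a bimorphism $A\to\Phi (A)\sp \#$'' transports, along the isomorphism $s\sp \#\sb{\Phi (A)}$, to exactly the data one needs on the other side: a $\C$-bijection making $(c\sb A)$ a genuine central function in the sense of Definition \ref{cntr}, respectively a commuting square making $(\sigma\sb A)$ a bimorphism $Id\sp\C\to\Phi$ in the sense of Definition \ref{inner}. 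The two diagram chases and the appeal to \thmref{allAuto} for $\Phi\sp{-1}$ are routine.
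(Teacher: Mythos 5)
Your proposal is correct and takes essentially the same route as the paper: you transport $\sigma_A$ to $c_A$ along the isomorphism $s^{\#}_{\Phi(A)}$ (the main function $s^{\Phi^{-1}}_{\Phi(A)}$ of $\Phi^{-1}$ with codomain cut to its image) and convert naturality of $(\sigma_A)$ into centrality of $(c_A)$ via the square $\Q(\mu)\circ s^{\Phi^{-1}}_{\Phi(A)}=s^{\Phi^{-1}}_{\Phi(B)}\circ \Q(\Phi(\mu))$, which is exactly the paper's argument in both directions. The differences are only presentational (a single two-way chain instead of two separate implications), and your tracking of where the partial inverse of $s^{\Phi^{-1}}_{\Phi(A)}$ is defined is, if anything, more careful than the paper's.
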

\begin{proof}
Let $\Phi $ be quasi-inner and  $(\st \sb A:A\to \Phi (A)) \sb {A \in Ob \C}$ be the corresponding family of bimorphisms (Definition \ref{inner}). Let $c\sb A =s\sp {\Phi \sp {-1}}\sb {\Phi (A)}\circ \st \sb A$. Obviously $(c\sb A )\sb A$ is a central function. Since the mapping $s\sp {\Phi \sp {-1}}\sb {\Phi (A)}$  can be considered as the isomorphism  $(s\sp {\Phi \sp {-1}}\sb {\Phi (A)})\sp\#: \Phi (A)\to \Phi (A)\sp \# $, we get that  $c\sb A: A\to \Phi (A)\sp \# $ is a bimorphism. 

Now suppose that there exists a central function $(c\sb A) \sb {A \in Ob \C}$ such that every $c\sb A$ is a bimorphism  $A\to \Phi (A)\sp \# $. Let us define $\st \sb A :A\to \Phi (A)$ as 
$$\st \sb A =(s\sb {\Phi (A)}\sp {\Phi \sp {-1}} )\sp{-1} \circ c \sb A =(s\sb {\Phi (A)}\sp {\Phi \sp {-1}\#} )\sp{-1}\circ c \sb A$$ 
for every $\C$-algebra $A$. 
This definition is correct because $c\sb A (|A|)\subseteq |\Phi (A)\sp\#|=\Im s\sb {\Phi (A)}\sp {\Phi \sp {-1}\#}$  and hence   
$\st \sb A :A\to \Phi (A)$ is a bimorphism.

Then we have for every $\mu :A\to B$ that $\mu \circ s\sb {\Phi (A)}\sp {\Phi \sp {-1}}=s\sb {\Phi (B)}\sp {\Phi \sp {-1}}\circ \Phi (\mu)$ which implies  
$$(s\sb {\Phi (B)}\sp {\Phi \sp {-1}})\sp {-1}\circ \mu \circ s\sb {\Phi A)}\sp {\Phi \sp {-1}}\circ (s\sb {\Phi A)}\sp {\Phi \sp {-1}})\sp {-1}\circ c\sb A  =(s\sb {\Phi (B)}\sp {\Phi \sp {-1}})\sp {-1}\circ s\sb {\Phi (B)}\sp {\Phi \sp {-1}}\circ \Phi (\mu)\circ \st \sb A$$
and hence $\st \sb B \circ \mu =\Phi (\mu) \circ \st \sb A$.

Thus $\Phi$ is quasi-inner.
\end{proof}
\begin{rem}
It may be that an automorphism $\Phi$ is quasi-inner but $\Phi \sp {-1}$ is not a such one. It is clear that for $\Phi \sp {-1}$ the statement like \propref{condQuasi}  is true  in which  "*" is used  instead of "\#" .
\end{rem}

Now, we proceed to describing for each $\C$-object $A$ the algebra $A\sp *$, taking into account that  $A\sp *$ perhaps is not an object of $\C$.

Suppose that arities of operations of our variety $V$ are less of some number $n$. Let $F$ be a $\C$-algebra and  $X=\{x\sb 1 ,\dots , x\sb n\}$ be a basis of $F$. Let $\om $ be a symbol of a $k$-ary operation and $\om \sb A$ be the corresponding operation on an algebra $A$. We consider the expression ${\bf w}=\om (x\sb 1 ,\dots , x\sb k)$ as a term in the language corresponding to the our variety. For every $k$-tuple $(a\sb1 ,\dots , a\sb k)$ of elements of an algebra $A$, the value 
$\om \sb A (a\sb 1, \dots ,a\sb k)$ is equal to $\th (\om (x\sb 1, \dots ,x\sb k))$ where $\th : F\to A$ such that $\th (x\sb i)=a\sb i$ for $i=1,\dots, k$.

Now consider the element $\tilde {\bf w}=s\sb A (\bf w)$ of the free algebra $\Phi (F)$.
It turns out that the element $\tilde {\bf w}$ determines an $\om$-operation on the set $s\sb A (|A|)$ for every $\C$-algebra $A$. Let $(u\sb1 ,\dots , u\sb k)$ be a $k$-tuple of elements of $s\sb A (|A|)$. Then we have an unique $k$-tuple $(a\sb1 ,\dots , a\sb k)$ of elements of the algebra $A$ such that $s\sb A (a\sb i) =u\sb i$ for $i=1,\dots, k$. Let $\th : F\to A$ be a homomorphism such that $\th (x\sb i)=a\sb i$ for $i=1,\dots, k$. We define the new operation $\widetilde \om \sb {\Phi (A )}$ by the value of the term  $\tilde{\bf w}$ as follows: 
\begin{equation}\label{derivedoper} 
\widetilde \om \sb {\Phi (A )}(u\sb1 ,\dots , u\sb k)=\Phi (\th) (\tilde {\bf w})
\end{equation}
On the other hand,  have the commutative diagram:
\[
\CD
F @> s\sb F>> \Phi (F)\\
@V\th VV        @VV\Phi (\th) V \\
A @> s\sb A >>\Phi (A)
\endCD
\]
We obtain $u\sb i=s\sb A (a\sb i)=s\sb A(\th (x\sb i)) =\Phi (\th ) (s\sb F (x\sb i))$ and 
\begin{gather*}
\Phi (\th )(\widetilde {\bf w})= \Phi (\th ) \circ s\sb F (\om  (x\sb 1,\dots ,x\sb k))=
s\sb A (\om \sb A (a\sb1, \dots , a\sb k))=\\ = \om \sb A\sp * (s\sb A (a\sb 1),\dots ,s\sb A (a\sb k))=\om \sb A\sp * (u\sb 1,\dots ,u\sb k) .
\end{gather*}
Hence $\widetilde \om\sb {\Phi (A)} (u\sb1, \dots , u\sb k)=\om \sb A\sp * (u\sb1, \dots , u\sb k)$ and the operation $\om \sb A\sp *$ is the derived operation $ \widetilde \om \sb {\Phi (A)}$ on $\Im s\sb A$. By the way this result shows that the operation defined by \eqref{derivedoper} does not depend on the choice of an algebra $F$ having the assumed property. Thus we have proved the following result.

\begin{thm}
For every $\C$-algebra $A$ the derived operation $\widetilde{\omega }\sb \Phi (A) $ defined by \eqref{derivedoper} coincides with the induced operation $\omega \sb A \sp *$, that is,
$$s\sb A (\omega \sb A (a\sb 1 ,\dots ,a\sb k))=\widetilde{\omega }\sb {\Phi (A )}(s\sb A (a\sb 1 ),\dots , s\sb A (a\sb k))$$
for all $a\sb 1,\dots ,a\sb k \in A$.
\end{thm}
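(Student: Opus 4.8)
The plan is to derive the identity by a single substitution into the commutativity property of the main function supplied by \thmref{allAuto}. Recall from that theorem (equivalently, from the square in Definition~\ref{potentially}) that for every $\C$-morphism $\mu:A\to B$ and every $a\in A$ one has $\Phi(\mu)(s\sb A(a))=s\sb B(\mu(a))$, where, as agreed in this section, we write an algebra and a homomorphism in place of their underlying set and map. First I would unwind the definition \eqref{derivedoper}. Fix $a\sb 1,\dots,a\sb k\in A$ and put $u\sb i=s\sb A(a\sb i)$, so each $u\sb i$ lies in $\Im s\sb A$; since $X=\{x\sb 1,\dots,x\sb n\}$ is a basis of the free algebra $F$ with $n$ larger than the arity $k$, there is a homomorphism $\theta:F\to A$ with $\theta(x\sb i)=a\sb i$ for $i=1,\dots,k$. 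Writing $\tilde{\bf w}=s\sb F(\omega\sb F(x\sb 1,\dots,x\sb k))$ for the element denoted $\tilde{\bf w}$ above, the definition \eqref{derivedoper} reads $\widetilde{\omega}\sb{\Phi(A)}(u\sb 1,\dots,u\sb k)=\Phi(\theta)(\tilde{\bf w})$.

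The core of the argument is the evaluation of $\Phi(\theta)(\tilde{\bf w})$. Applying the commutativity property to $\mu=\theta$ at the single element $\omega\sb F(x\sb 1,\dots,x\sb k)\in F$ gives
\[
\Phi(\theta)(\tilde{\bf w})=\Phi(\theta)\bigl(s\sb F(\omega\sb F(x\sb 1,\dots,x\sb k))\bigr)=s\sb A\bigl(\theta(\omega\sb F(x\sb 1,\dots,x\sb k))\bigr).
\]
Since $\theta$ is a homomorphism and $\theta(x\sb i)=a\sb i$, we have $\theta(\omega\sb F(x\sb 1,\dots,x\sb k))=\omega\sb A(a\sb 1,\dots,a\sb k)$, whence $\Phi(\theta)(\tilde{\bf w})=s\sb A(\omega\sb A(a\sb 1,\dots,a\sb k))$. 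Comparing with the reading of \eqref{derivedoper} from the previous paragraph yields $\widetilde{\omega}\sb{\Phi(A)}(s\sb A(a\sb 1),\dots,s\sb A(a\sb k))=s\sb A(\omega\sb A(a\sb 1,\dots,a\sb k))$, which is the asserted equality; put differently, the induced operation $\omega\sb A\sp*$ on $\Im s\sb A$ is exactly the restriction there of the derived operation $\widetilde{\omega}\sb{\Phi(A)}$.

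I do not expect a serious obstacle: the substance is already contained in \thmref{allAuto}, and the remainder is the diagram chase above plus a little bookkeeping handled on the way. The bookkeeping is to check that \eqref{derivedoper} is unambiguous. The preimages $a\sb i$ of the $u\sb i$ are unique because $s\sb A$, being a $\C$-bijection, is injective (Definition~\ref{bijection}); and although $\theta$ is pinned down only on $x\sb 1,\dots,x\sb k$, the computation just performed shows $\Phi(\theta)(\tilde{\bf w})=s\sb A(\omega\sb A(a\sb 1,\dots,a\sb k))$ is independent of the remaining values $\theta(x\sb{k+1}),\dots,\theta(x\sb n)$, so no genuine choice is being made. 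The same closed form mentions neither $F$ nor $\theta$, which in addition shows that \eqref{derivedoper} does not depend on the chosen free algebra $F$ (any $\C$-algebra free on at least $n$ generators will do); I would record this as a concluding remark.
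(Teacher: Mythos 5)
Your proposal is correct and follows essentially the same route as the paper: both evaluate $\Phi(\theta)(\tilde{\bf w})$ by chasing the commutative square $s\sb A\circ\theta=\Phi(\theta)\circ s\sb F$ from \thmref{allAuto} at the element $\omega\sb F(x\sb 1,\dots,x\sb k)$ and then use that $\theta$ is a homomorphism. Your closing remarks on well-definedness and independence of the choice of $F$ likewise mirror the observation the paper makes right after its computation.
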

Now we are ready to prove the second main theorem.
\begin{thm}\label{product}
Let $\C$ be a  full subcategory of $\Th \sp 0 (\V)$  containing  a monogenic algebra $A\sb 0$. Let $\Phi$ be an automorphism of $\C$ . Then 
\item {(i)} there is a full subcategory $\D$ of $\Th\sp 0 (\V)$ such that $\Phi$  is a product $\Phi =\G \circ \Psi$, where $\Psi : \C \to \D$ is an inner isomorphism and $\G : \D \to \C$ is an extension functor. 
\item {(ii)} there is a term ${\bf t}(x\sb 0) \in |A\sb 0|$ such that  $|\Psi (A)|=\{ {\bf t}(u)\vert u\in |\Phi (A)\}|$ for every $\C$-object $A$.
\end{thm}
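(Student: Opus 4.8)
The plan is to extract $\D$ and $\Psi$ from the ${}^*$-construction preceding the theorem, and $\G$ from \thmref{restriction}. Let $\D$ be the full subcategory of $\Th^0(\V)$ whose objects are the algebras $A^*$ for $A\in\Ob\C$; each $A^*$ is a finitely generated $\V$-algebra because $(s^\Phi_A)^*\colon A\to A^*$ is an isomorphism, so $\D$ is a legitimate full subcategory of $\Th^0(\V)$. Define $\Psi\colon\C\to\D$ by $\Psi(A)=A^*$ on objects and, for $\mu\colon A\to B$, by $\Psi(\mu)=(s^\Phi_B)^*\circ\mu\circ\big((s^\Phi_A)^*\big)^{-1}$. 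This is functorial, full and faithful (conjugation by the isomorphisms $(s^\Phi_A)^*$), and surjective on objects, hence an isomorphism onto $\D$; and by its very formula, taking $\st_A=(s^\Phi_A)^*\colon A\to\Psi(A)$, it satisfies $\Psi(\mu)=\st_B\circ\mu\circ\st_A^{-1}$, so $\Psi$ is an inner isomorphism in the sense of Definition~\ref{functors}.

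Next I would define $\G\colon\D\to\C$ using \thmref{restriction}: put $\G(A^*)=\Phi(A)$, and for $\nu\colon A^*\to B^*$ let $\G(\nu)$ be the unique homomorphism $\Phi(A)\to\Phi(B)$ whose restriction to $A^*$ is $\nu$ — existence and uniqueness are precisely the content of \thmref{restriction}, and since $\C$ is full this homomorphism is a $\C$-morphism. Functoriality is forced by the uniqueness clause: $1_{\Phi(A)}$ extends $1_{A^*}$, and $\G(\nu')\circ\G(\nu)$ extends $\nu'\circ\nu$. The three defining properties of an extension functor (Definition~\ref{functors}) then follow: (i) $|A^*|=\Im s^\Phi_A\subseteq|\Phi(A)|=|\G(A^*)|$; (ii) $\nu\subseteq\G(\nu)$ by construction; (iii) if a $\C$-morphism $\nu'\colon\G(A^*)\to\G(B^*)$ satisfies $\mu\subseteq\nu'$ for some $\mu\colon A^*\to B^*$, then $\nu'=\G(\mu)$ by the uniqueness in \thmref{restriction} (which itself rests on \propref{uniq}). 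Injectivity of $\G$ on objects follows from injectivity of $\Phi$ on objects, and on morphisms from $\nu=\G(\nu)|_{A^*}$. Thus $\G$ is an extension functor.

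It remains to identify $\G\circ\Psi$ with $\Phi$. On objects $\G(\Psi(A))=\G(A^*)=\Phi(A)$. For $\mu\colon A\to B$, the commuting square associated with the main function $(s^\Phi_A)$, namely $s^\Phi_B\circ\mu=\Phi(\mu)\circ s^\Phi_A$ (\thmref{allAuto}), says exactly that the restriction of $\Phi(\mu)$ to $A^*$ equals $(s^\Phi_B)^*\circ\mu\circ\big((s^\Phi_A)^*\big)^{-1}=\Psi(\mu)$; by the uniqueness in \thmref{restriction}, $\Phi(\mu)=\G(\Psi(\mu))$. This proves part (i). For part (ii), let ${\bf t}(x_0)\in|A_0|$ be the term produced by \thmref{image}; then for every $\C$-object $A$ we get $|\Psi(A)|=|A^*|=\Im s^\Phi_A=\{{\bf t}(u)\vert u\in|\Phi(A)|\}$, which is the asserted description.

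Almost all of the substantive work is already contained in \thmref{restriction} and \thmref{image}, so the argument is largely assembly. The step demanding the most care is that the restriction–extension correspondence of \thmref{restriction} must simultaneously deliver the functoriality of $\G$ and all three axioms of an extension functor — axiom (iii) especially, where the $\C$-bijectivity of the main function via \propref{uniq} is indispensable. One should also keep in mind the mild set-theoretic bookkeeping that $A^*$ in general lies in $\Th^0(\V)$ but not in $\C$, so that $\D$ is a genuinely different subcategory and $\Phi$ is factored through it rather than being inner.
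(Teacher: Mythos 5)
Your proposal is correct and follows essentially the same route as the paper: the same subcategory $\D$ of algebras $A^*$, the same inner isomorphism $\Psi$ given by conjugation with $(s^\Phi_A)^*$, and the extension functor obtained from Theorem~\ref{restriction}, with part (ii) read off from Theorem~\ref{image}. The only cosmetic difference is that the paper defines $\G=\Phi\circ\Psi^{-1}$ outright and then verifies $\mu\subseteq\G(\mu)$, whereas you define $\G$ by unique extension and then verify $\G\circ\Psi=\Phi$ — an equivalent reorganization of the same argument.
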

\begin{proof}
Let $\D$ be the full subcategory of $\Th \sp 0 (\V)$ whose objects are all algebras $A\sp *$ where $A$ is an arbitrary $\C$-object. Let $\Psi (A)=A\sp *$ for every $\C$-object $A$ and $\Psi (\mu) =s\sp *\sb B \circ \mu \circ (s\sp * \sb A)\sp {-1}$ for every $\C$-morphism $\mu :A\to B$. It is clear that $\Psi : \C \to \D $ is a functor and, what is more, $\Psi$ is an inner isomorphism between $\C$ and $\D$. 

Now let $\G =\Phi \circ \Psi \sp{-1}$. Then $\G$ is a functor $\D\to \C$. Of course, $\G$ is an isomorphism between these categories and $\Phi =\G \circ \Psi$. We have $\G (A\sp *) =\Phi (A)$ and therefore $|A\sp *| \subseteq |\G (A\sp *)|$. 
Let $\mu :A\sp* \to B\sp *$. According to the definition of $\Psi$, we have $\nu =\Psi \sp{-1}(\mu) =(s\sp *\sb B)\sp {-1} \circ \mu \circ s\sp * \sb A$ is a homomorphism $A\to B$. Thus $\G (\mu ) =\Phi (\nu)$. On the other hand, 
$s\sb B \circ (s\sp *\sb B)\sp{-1} \circ \mu \circ s\sp * \sb A \circ (s\sb A)\sp {-1}=\mu$, that is, $\mu$ is the restriction of $\G (\mu)$ on $A\sp*$. In view of Theorem \ref{restriction} and Definition \ref{functors} we obtain that $\G$ is an extension functor. The second statement follows from \thmref{image}.
\end{proof}
\begin{cor}\label{Corollary}
The last theorem shows that the process of describing of an automorphism $\Phi$ of a category $\C$ under consideration is reduced to the following steps:
\begin{enumerate}
\item  We view the elements of $A\sb 0$ and find the general form of the term ${\bf t}(x\sb 0) \in |A\sb 0|$. Then  
we  describe the subset $s\sb A (|A|)$ of an algebra $\Phi (A)$ according  to the formula $s\sb A (|A|)=\{ {\bf t}(u): u\in |\Phi (A)|\}$.  In the case ${\bf t}(x\sb 0) =x\sb 0$ this set is equal to the underlying set of $\Phi (A)$.
\item In order to describe operations of $A\sp *$, we use the fact that the restriction of any endomorphism of $\Phi (A)$ to $s\sb A (|A|)$ must be an endomorphism of the algebra $A\sp *$, and vice versa, any endomorphism of $A\sp *$ could be extended up to an endomorphism of $\Phi (A)$ which is uniquely determined. 
\item We use the fact that the correspondence described above is an isomorphism between semigroups $\End (A\sp *)$ and $\End (\Phi (A))$.
\end{enumerate}
If the requirements of these steps are fulfilled, it remains to describe the kind of embedding of $A\sp *$ in $\Phi (A)$, which may be an isomorphism or some new kind of a correspondence, for example, a mirror homomorphism or a screw-homomorphism.
\end{cor}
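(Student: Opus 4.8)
The plan is to read the corollary directly off Theorems~\ref{image}, \ref{restriction} and \ref{product}, together with the (unlabelled) theorem on derived operations that precedes Theorem~\ref{product}: each of the three listed steps is legitimised by one of these results, and I would then observe that carrying them out recovers $\Phi$ completely. I would open by recalling the factorization $\Phi = \G \circ \Psi$ of Theorem~\ref{product}(i), in which $\Psi : \C \to \D$ is the inner isomorphism $A \mapsto A\sp *$ with structure bijections $(s\sp *\sb A)$ and $\G : \D \to \C$ is an extension functor. Since $s\sb A (|A|) = |\Psi (A)| = |A\sp *|$, describing $\Phi$ reduces to describing, for every $\C$-object $A$, the algebra $A\sp *$ together with the way $|A\sp *|$ sits inside $|\Phi (A)|$.

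For step~(1) I would invoke Theorem~\ref{image} (equivalently Theorem~\ref{product}(ii)): the one-variable term $\mathbf{t}(x\sb 0) \in |A\sb 0|$ is produced, as in the proof of that theorem, as $s\sp {\Phi}\sb {\Phi \sp {-1}(A\sb 0)}(x)$ for a basis element $x$ of $\Phi \sp {-1}(A\sb 0)$, and it gives $s\sb A (|A|) = \{\mathbf{t}(u) : u \in |\Phi (A)|\}$ for every $A$. In the special case $\mathbf{t}(x\sb 0) = x\sb 0$ each basis element of $\Phi (A)$ then lies in $s\sb A (|A|)$, so \propref{notsurjective} forces $s\sb A$ to be surjective and $s\sb A (|A|) = |\Phi (A)|$, as claimed.

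For steps~(2) and~(3) I would specialise Theorem~\ref{restriction} to the case $B = A$. That theorem says exactly that restriction to $|A\sp *|$ sends each endomorphism of $\Phi (A)$ to an endomorphism of $A\sp *$, and that every endomorphism of $A\sp *$ is the restriction of precisely one endomorphism of $\Phi (A)$; this is the content of step~(2). For step~(3) I would consider the restriction map $\rho : \End (\Phi (A)) \to \End (A\sp *)$, which is a bijection by the uniqueness clause of Theorem~\ref{restriction} and which respects composition, since every endomorphism of $\Phi (A)$ carries $|A\sp *|$ into $|A\sp *|$ and hence $\rho (\mu\sb 1 \circ \mu\sb 2) = \rho (\mu\sb 1) \circ \rho (\mu\sb 2)$; thus $\rho$ is a monoid isomorphism. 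Because $A\sp * \cong A$ through $s\sp *\sb A$, this simultaneously identifies $\End (\Phi (A))$ with $\End (A)$, which is what makes the recipe effective: by the theorem on derived operations the induced operations $\om \sb A \sp *$ are derived operations $\widetilde{\om }\sb {\Phi (A)}$ of $\Phi (A)$, and they are pinned down by their compatibility with the now-known monoid $\End (\Phi (A))$.

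It then remains to close the loop and confirm that knowing each $A\sp *$ and its embedding into $\Phi (A)$ determines $\Phi$. This is immediate from the factorization: $\Psi$ is fixed once the algebras $A\sp * = \Psi (A)$ and the isomorphisms $s\sp *\sb A$ are known, while $\G$ is fixed by clause~(iii) of Definition~\ref{functors} together with Theorem~\ref{restriction}, which force $\G (\nu)$ to be the unique endomorphism of $\Phi (A)$ extending $\nu$. The only residual freedom is therefore the nature of the inclusion $A\sp * \into \Phi (A)$; and since the operations of $A\sp *$ are the derived operations $\widetilde{\om }\sb {\Phi (A)}$ rather than the original operations of $\Phi (A)$, this inclusion need not be an ordinary homomorphism but may assume an exotic form such as a mirror- or screw-homomorphism. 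I expect the genuine obstacle to lie not in any of the three formal steps, which are all corollaries of the earlier theorems, but in this last sentence: for a concrete variety one must exhibit the term $\mathbf{t}(x\sb 0)$ explicitly and identify the closed form of the derived operations $\widetilde{\om }\sb {\Phi (A)}$, and it is exactly there that the exotic embeddings are discovered.
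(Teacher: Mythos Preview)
Your proposal is correct and matches the paper's intent: the paper gives no separate proof for this corollary at all, treating it as a methodological summary read directly off Theorems~\ref{image}, \ref{restriction}, \ref{product} and the unlabelled theorem on derived operations, exactly as you do. Your write-up simply makes explicit the justifications the paper leaves to the reader.
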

\begin{examples*}
%example 1
\item{1.} First we will show how method suggested above can be applied in a simple case when the result is already known, namely for the category $SEM$ of all  finitely generated free semigroups. Let $\C$ be a full subcategory of $SEM$ containing a monogenic semigroup $A\sb 0$. Let $\Phi$ be an automorphism of $\C$ . Any term ${\bf t}(x\sb 0) \in A\sb 0$ in our case has a form ${\bf t}(x\sb 0) =x\sb 0 \sp k$ where $k\geq 1$ is a whole number. Let $ F=F(x\sb 1,\dots , x\sb n)$ be a free semigroup generated by variables $x\sb1,\dots , x\sb n$ and $A=\Phi \sp{-1}( F)$.  Thus $A\sp *=\{u\sp k\vert u\in F\}$. Let $||w||$ denote the length of the word $w\in F$.

Let $y$ be an element of a basis of $A\sp *$, then there exists an endomorphism $\g$ of $A\sp *$ such that $x\sb 1\sp k =\g (y)$. Let the endomorphism $\tilde{\g}$ of  $ F$ be the extension of $\g$. Thus $\tilde{\g} (y)=x\sb 1\sp k$. 
Since $y=u\sp k$ for some $u\in F$ we get $k\geq ||y||\geq k$ and hence $y\in \{x\sb 1\sp k ,\dots , x\sb n\sp k\}$. Applying this result to $A\sb 0$ (n=1) we obtain that $(\Phi \sp {-1}(A\sb 0))\sp *$  and $A\sb 0$ are isomorphic and hence $\Phi \sp {-1}(A\sb 0)$ and $A\sb 0$ are isomorphic. We know (\secref{Free}) that in this case all mappings $s\sb A$ are surjective and hence $k=1$.
Therefore $(x\sb 1, x\sb 2)$ is the common basis of semigroups  $ F=F(x\sb 1, x\sb2)$ and $A\sp *$ which are isomorphic. 

Since  $(x\sb 1, x\sb 2)$ is the unique basis of $F$ and the unique basis of $A\sp*$, there are only two isomorphisms  $A\sp* \to F$, namely $\varphi (x\sb 1) =x\sb 1 , \varphi (x\sb2 ) = x\sb 2$ or $\varphi (x\sb 1) =x\sb 2, \varphi (x\sb 2 )=x\sb1$. In the former case we have $\varphi (x\sb 1 *x\sb 2) =x\sb 1x\sb 2 $ and hence $\varphi$ is the identity mapping. In
the latter case, we have $\varphi (x\sb 1 *x\sb 2) =x\sb 2x\sb 1 $ and therefore $\varphi$ maps every word $u$ to the word $\underline{u}$, where all letters are written in the reverse order.  We obtain that $\Phi$ is produced by isomorphisms, that is, $\Phi$ is inner, or all mappings $s \sb A : A\to \Phi (A)$ are anti-isomorphisms.
%example 2
\item{2.}
Now we apply our method to the variety of modules. As far as the author knows there are no results in the general case. 
There are some essential results in this topic in \cite{KLP} and \cite{Lipyan}. 
Let $ R$ be an arbitrary ring with unit and ${\bf Mod - R }$ denote the category of all finitely generated free left $R$-modules. We consider a full subcategory $\C$ of the category ${\bf Mod - R }$ which satisfies the accepted conditions. In this case $A\sb 0  =Rx\sb 0$.

Let $\Phi$ be an automorphism of $\C$.  Consider a free left $R$-module ${\bf M} =(M, +, 0, F)$  where $F: R\times M \to M$  is the left action of $R$ on $M$. Assume that    $\{x\sb 1 ,x\sb 2\}$ is a basis of ${\bf M}$, that is, $M = Rx\sb 1 \oplus Rx\sb 2$. Let $A=\Phi \sp {-1} ({\bf M})$. We have to describe the finite generated free left $R$-module $A\sp *=(|A\sp *|,+\sp *, 0\sp *, F\sp *)$.  Since any term ${\bf t}(x\sb 0) \in A\sb 0$ in our case has a form ${\bf t}(x\sb 0) =rx\sb 0 $ where $r\in R ,\; r\not = 0$, we get $|A\sp *|=rM$. It is clear that $0\in |A\sp*|$.

On the other hand, for every endomorphism $\varphi$ of $A\sp *$ we have $\varphi (0\sp *)=0\sp *$. Then according to (2) in \corref{Corollary} $\g (0\sp *)=0\sp *$ for every endomorphism $\g$ of $\bf M$. Thus $0\sp * =0$. Further  $r x\sb 1 +\sp* r x\sb 2 =i x\sb 1 +j x\sb 2 $ for some $i,\; j \in R$.
 Let $\g :{\bf M}\to {\bf M} $ such that $\g ( x\sb 1)=x\sb 1,\;\g ( x\sb 2) =0$. Since restriction of $\g$ to $|A\sp*|$ is an endomorphism of $A\sp*$ too, we get 
$$\g (r x\sb 1 +\sp* r x\sb 2)=\g (r x\sb 1) +\sp* \g (r x\sb 2) =r\g (x\sb 1)+\sp* r\g ( x\sb 2)=r x\sb 1+\sp* 0=rx\sb 1.$$
On the other hand, $\g (i x\sb 1 +j x\sb 2) =ix\sb 1$. Thus $rx\sb 1=ix\sb 1$ and hence $i=r$. In exactly the same way we get $j=r$ and hence $r x\sb 1 +\sp* r x\sb 2 =r x\sb 1 +r x\sb 2 $ . 

Let $a\sb1 , a\sb 2 \in M$ and $\g$ be an endomorphism of $\bf M$ such that $\g (x\sb1)=a\sb 1$ and $\g (x\sb 2)=a\sb 2$.
Just as above we get
$$ \g (r x\sb 1 +\sp* r x\sb 2)=\g (r x\sb 1)+\sp* \g (r x\sb 2)=r\g (x\sb 1)+\sp* r\g (x\sb 2)=ra\sb1+\sp* ra\sb 2.$$
On the other other hand we get
$$\g (r x\sb 1 + r x\sb 2)=\g (r x\sb 1) +\g (r x\sb 2)=r\g (x\sb 1) +r\g(x\sb 2)=ra\sb 1+ra\sb 2 .$$
As a result we obtain
$$ra\sb1+\sp* ra\sb 2 =ra\sb 1+ra\sb 2,$$
which leads after obvious calculations to the fact that operations $+\sp *$ and $+$ coincide on $|A\sp*|$. Thus the different between structures $\bf M$ and $ A\sp *$ may be only in actions of the ring $R$.  

It is obvious that for every $k\in R$ there is an element $k\sp*\in R$ such that $k*rx\sb1=rk\sp* x\sb1$. Such an element 
$k\sp*$ may be is not uniquely determined.  But there exists a function $\a :R\to R$ such that $k*rx\sb1=r\a (k)x\sb1$. For  two such function $\a\sb 1$ and $\a\sb 2$ we have $r\a\sb 1 =r\a\sb 2$. In the same way as above we get: $k*ra=r\a (k)a$ for every $a\in M$. Further 
$$( k\sb 1 k\sb 2)* ra =k\sb 1 * (k\sb 2 *ra)=k\sb 1* r\a (k\sb 2)a =r(\a (k\sb 1)\a (k\sb 2))a .$$
On the other other hand we get
$$ ( k\sb 1 k\sb 2)* ra=r\a (k\sb 1 k\sb 2)a.$$
Since $a$ is an arbitrary element of $M$ we obtain that 
\begin{equation}\label{function1}
r\a (k\sb 1 k\sb 2)=r(\a (k\sb 1)\a (k\sb 2)).
\end{equation}
In the same way we get that 
\begin{equation}\label{function2}
r\a (k\sb 1+k\sb 2) =r\a (k\sb 1)+r\a (k\sb 2)
\end{equation}
and
\begin{equation}\label{function3}
r\a (1)=r.
\end{equation}
Summing up these investigations, we obtain the following description of automorphisms of $\C$. 

For every automorphism $\Phi$ of $\C$ the main function $(s\sb A: A \to \Phi (A))\sb {A\in Ob\C}$ satisfies the following conditions:
\item (1) there exists an element $r\in R$ such that for every $a\in A$ $s\sb A (a)=ru$ for some $u\in \Phi (A)$, 
\item (2) every $\C$-bijection $ s\sb A: A \to \Phi (A)$ is an additive mapping,
\item (3) there exist a function $\a : R\to R$ such that for every $k\in R$ and every $ a\in A $ it holds $s\sb A (ka)=r \a (k)u$ where $ru=s\sb A (a), u\in \Phi (A)$,
\item (4) the function $\a $  satisfies the equations \ref{function1} - \ref{function3}. 

Consider the case when the ring $R$ does not contain zero divisors. Let $B=\Phi\sp {-1}(Rx\sb 0)$.  Suppose that a basis of the module $B\sp *$ contains two different elements $y\sb 1$ and $y\sb 2$. Let $\g$ be the automorphism of $B\sp *$ such that $\g (y\sb 1) =y\sb 2$ and $\g (y\sb 2) =y\sb 1$ and $\tilde {\g}$ is the extension of $\g$ up to an automorphism of $Rx\sb 0$. Since $\g \sp 2 =1\sb {B\sp *}$ the same property is valid for $\tilde {\g}$. Therefore  we get  $\tilde{\g}\sp 2 (x\sb 0) =x\sb 0$. Since $\tilde {\g}(x\sb 0) =kx\sb 0$ for some $k\in R$ we get that $k\sp 2 =1$ and hence $k=1$ or $k=-1$. Thus $y\sb 1 = y\sb 2$ or $y\sb 1 =- y\sb 2$ that is contrary to the assumption. 

We obtain that $\Phi\sp {-1}(Rx\sb 0)$  and $(Rx\sb 0)$ are isomorphic. In this case all mappings $s\sb A :A\to \Phi (A) $ are surjective and $r=1$.  Therefore all automorphisms of $\C$ are semi-inner, that is, the mappings $s\sb A : A\to \Phi (A)$ are additive bijections and there exists an endomorphism $\a$ of $R$ such that $s\sb A (ka)=\a (k) s\sb A (a)$ for all $k\in R$ and $a\in A$. Since the same is true for the automorphism $\Phi \sp {-1}$ we conclude that $\a$ is an automorphism of the ring $R$.
\end{examples*}
{\bf Acknowledgments}
The author is pleased to thank B. Plotkin and E. Plotkin, R. Lipjansky and G. Mashevitsky for useful discussions and interesting suggestions.


\begin{thebibliography}{1}
\bibitem{KLP} Y.Katsov, R.Lipyanski, B.Plotkin, Automorphisms of categories of free modules, free semimodules and free Lie modules, Communications in Algebra, 35 (2007), 931-952.
\bibitem{Lipyan} R. Lipyanski, B. Plotkin, Automorphisms of categories of free modules and free Lie algebras,
Preprint. Arxiv:math. RA//0502212 (2005).
\bibitem{MPP} G. Mashevitzky, B. Plotkin, E. Plotkin, Automorphisms of categories of free algebras of varieties, Electron. Res.
Announc. Amer. Math. Soc. 8 (2002) 1-10.
\bibitem{MPP2} G. Mashevitzky, B. Plotkin, E. Plotkin, Automorphisms of the category of free Lie algebras, J. Algebra 282 (2004) 490-512.
\bibitem{Seven}  B. Plotkin, Seven lectures on the universal algebraic geometry, arXiv: math.GM/0204245, 2002.
\bibitem{AlgGeom} B. Plotkin, Algebras with the same (algebraic) geometry, Proc. Steklov Inst. Math. 242 (2003) 176-207.
\bibitem{ZhPlVar} B. Plotkin , G. Zhitomirski, On automorphisms of categories of free algebras of
some varieties, J. Algebra 306(2) (2006) 344-367.
\bibitem{ZhPlUnivAlg} B. Plotkin , G. Zhitomirski,  On automorphisms of categories of universal algebras, International Journal of Algebra and Computation, Vol. 17, Nos. 5\&6 (2007) 1115-1132
\bibitem{Tsurkov}A. Tsurkov, Automorphic equivalence of many-sorted algebras, Appl. Categ. Struct.
24(3) (2016) 209–240, doi:10.1007/s10485-015-9394-y.
\end{thebibliography}
\end{document}